\theoremstyle:=definition,remark,plain\do{%
        \expandafter\g@addto@macro\csname th@\theoremstyle\endcsname{%
            \addtolength\thm@preskip\parskip
            }%
        }
\definecolor{dnrbl}{rgb}{0,0,0.3}
\definecolor{dnrgr}{rgb}{0,0.3,0}
\definecolor{dnrre}{rgb}{0.5,0,0}
\theoremstyle{plain}
\newtheorem{thm}{Theorem}[section]
\newtheorem{lem}[thm]{Lemma}
\newtheorem{coro}[thm]{Corollary}
\newtheorem{defi}[thm]{Definition}
\numberwithin{equation}{subsection}
\let\c@table\c@figure
\newcommand{\Nat}{\mathbb{N}}
\newcommand{\restr}{\upharpoonright}  
\newcommand{\de}{\downarrow} 
\DeclarePairedDelimiter{\tuple}{\langle}{\rangle}
\newcommand{\bigo}[1]{\mathop{\bf O}\/\left({#1}\right)}
\newcommand{\smo}[1]{\mathop{\bf o}\/\left({#1}\right)}
\newcommand{\FSW}{Figueira, Stephan, and Wu\ }
\newcommand{\KS}{Ku{\v{c}}era and Slaman\ }
\newcommand{\CHKW}{Calude, Hertling, Khoussainov and Wang\ }
\newcommand{\DHN}{Downey, Hirschfeldt and Nies\ }
\newcommand{\DHL}{Downey, Hirschfeldt and Laforte\ }
\newcommand{\BD}{Bienvenu and Downey\ }
\newcommand{\BMN}{Bienvenu, Merkle  and Nies\ }
\newcommand{\HKM}{H\"{o}lzl, Kr\"{a}ling and Merkle\ }
\newcommand{\CDS}{Calude, Dinneen and Shu\ }
\newcommand{\ml}{Martin-L\"{o}f }
\newcommand{\eg}{e.g.\ }
\newcommand{\ie}{i.e.\ }
\newcommand{\ce}{c.e.\ }
\newcommand{\rce}{right-c.e.\ }
\newcommand{\pf}{prefix-free }
\renewenvironment{abstract}
 { \normalsize
  \list{}{
    \setlength{\leftmargin}{.0cm}%
    \setlength{\rightmargin}{\leftmargin}%
    }%
  \item {\bf \abstractname.} \relax}
 {\endlist}
\newtheorem*{rep@theorem}{\rep@title}
\newcommand{\newreptheorem}[2]{%
\newenvironment{rep#1}[1]{%
 \def\rep@title{#2 \ref{##1}}%
 \begin{rep@theorem}}%
 {\end{rep@theorem}}}
\newcommand{\pfm}{prefix-free machine }
\newcommand{\pfmn}{prefix-free machine}
\title{Optimal asymptotic bounds on the oracle use in computations from Chaitin's Omega\thanks{Barmpalias 
was supported by the 
1000 Talents Program for Young Scholars from the Chinese Government,
and the Chinese Academy of Sciences (CAS) President's International 
Fellowship Initiative No. 2010Y2GB03.
Additional support was received by
the CAS and the Institute of Software of the CAS.
Partial support was also received from a Marsden grant of New Zealand 
and the China Basic Research Program (973) grant No.~2014CB340302.
}}
\author{George Barmpalias  \and Nan Fang  \and Andrew Lewis-Pye}
\date{\today}
\begin{document}
\maketitle
\begin{abstract}
Chaitin's number $\Omega$ is the halting probability of a universal \pf
machine, and although it depends on the underlying enumeration
of prefix-free machines, it is always Turing-complete.
It can be observed, in fact, that for every computably enumerable (c.e.) real $\alpha$, there exists a Turing functional via which $\Omega$ computes $\alpha$, and such that the number of bits of $\Omega$ 
that are needed for the computation of the first $n$ bits of $\alpha$ (i.e.\ the \emph{use} on argument $n$)
is bounded above by a computable function $h(n)=n+\smo{n}$.

We characterise the 
asymptotic upper bounds on the use of Chaitin's $\Omega$ in oracle computations
of halting probabilities (\ie c.e.\  reals).
We show that the following two conditions are equivalent for any computable function $h$ such that $h(n)-n$ is non-decreasing: (1)   $h(n)-n$ is an information content measure,
\ie the series $\sum_n 2^{n-h(n)}$ converges, (2) for every c.e.\ real $\alpha$  there exists a Turing functional via which $\Omega$ computes $\alpha$ with use bounded by  $h$. 
We also give a similar characterisation with respect to computations of \ce sets from 
$\Omega$, by showing that the following are equivalent for any computable non-decreasing function $g$: (1) $g$ is an information-content measure, (2)  for every c.e.\ set $A$, $\Omega$ computes $A$ with use bounded by $g$. Further results and some connections with Solovay functions (studied by a number of authors 
\cite{Solovay:75,DBLP:conf/stacs/BienvenuD09, holzlMerkKra09,BienvenuMN11}) 
are given.
\end{abstract}
\vfill
\noindent{\bf George Barmpalias}\\[0.2em]
\noindent State Key Lab of Computer Science, 
Institute of Software, Chinese Academy of Sciences, Beijing, China.
School of Mathematics, Statistics and Operations Research,
Victoria University of Wellington, New Zealand.\\[0.1em] 
\textit{E-mail:} \texttt{\textcolor{dnrgr}{barmpalias@gmail.com}.}
\textit{Web:} \texttt{\href{http://barmpalias.net}{http://barmpalias.net}}
\vfill
\noindent{\bf Andrew Lewis-Pye}\\[0.2em]
\noindent Department of Mathematics,
Columbia House, London School of Economics, 
Houghton Street, London, WC2A 2AE, United Kingdom.\\[0.1em]
\textit{E-mail:} \texttt{\textcolor{dnrgr}{A.Lewis7@lse.ac.uk}.}
\textit{Web:} \texttt{\textcolor{dnrre}{http://aemlewis.co.uk}} 
\vfill
\noindent{\bf Nan Fang}\\[0.2em]
\noindent Institut für Informatik, Ruprecht-Karls-Universität Heidelberg, Germany.\\[0.1em]
\textit{E-mail:} \texttt{\textcolor{dnrgr}{nan.fang@informatik.uni-heidelberg.de}.}
\textit{Web:} \texttt{\textcolor{dnrre}{http://fangnan.org}} 
\vfill\thispagestyle{empty}
\clearpage

\section{Introduction}
Chaitin's constant is the most well-known algorithmically random number.
It is the probability that a universal \pfm $U$ halts, when a random sequence of 0s and 1s is
fed into the input tape. In symbols:
\[
\Omega_U=\sum_{U(\sigma)\de} 2^{-|\sigma|}.
\]
While $\Omega_U$ clearly depends on the choice of underlying universal \pfm $U$, all versions of it 
are quite similar with respect to the properties we shall be concerned with here. For this reason, 
when the choice of underlying machine $U$ is irrelevant, we shall simply refer to $\Omega_U$ as an $\Omega$-number, or 
denote it by $\Omega$.
The choice of $U$ is essentially equivalent to the choice of a programming language. 
Although $\Omega$ is incomputable, it is the limit of an increasing 
computable sequence of rational numbers. Such reals are known as computably
enumerable reals (\ce reals, 
a more general class than the c.e.\ \emph{sets}), and can be viewed as the halting probabilities of
(not necessarily universal) \pf machines.

The binary expansion of $\Omega$ is an algorithmically  unpredictable sequence of 0s and 1s,
which packs a lot of useful information into rather short initial segments.
In the words of Charles H.~Bennett (also quoted by Martin Gardner in \cite{bennet79}), 
\begin{quote}
[Chaitin's constant] embodies an enormous amount of wisdom in a very small space \dots inasmuch as its first few thousands digits, which could be written on a small piece of paper, contain the answers to more mathematical questions than could be written down in the entire universe\dots
\begin{flushright}
Charles H.~Bennett \cite{bennettreal79} 
\end{flushright}
\end{quote}

Being of the same Turing degree, $\Omega$  may be seen as containing the same information as the halting problem, 
but it contains this same information in a much more compact way. It is Turing complete, so it computes all
c.e.\  sets and all c.e.\  reals.
As one might expect, given these remarkable properties, it is very hard to
obtain initial segments of $\Omega$. The modulus of convergence
in any monotone computable approximation to $\Omega$, for example, dominates all computable functions.
Chaitin's incompleteness theorem  \cite{MR0411829} (also independently proved by Muchnik) tells us that
any computably axiomatizable theory interpreting Peano arithmetic
can determine the value of only finitely many bits of $\Omega$.
Solovay \cite{solovay00} exhibited a specific universal prefix-free machine $U$
with respect to which ZFC cannot determine \emph{any} bit of $\Omega_U$.
Despite these extreme negative results, \CDS
\cite{CaludeDS02,ijbc/CaludeD07} computed a finite number of bits for certain versions
of $\Omega$ (\ie corresponding to certain canonical enumerations of the \pf Turing machines).

Given the Turing completeness of $\Omega$ and the extreme difficulty of obtaining the bits of
its binary expansion, it is interesting to consider the lengths of the initial segments of $\Omega$ that
are used in oracle computations of \ce sets and \ce reals.
How many bits of $\Omega$, for example, 
are needed in order to compute $n$ bits of a given c.e.\  real or a given \ce set?
The goal of this paper is to answer these questions, as well raising and discussing
a number of related issues that point to an interesting research direction, with clear connections
to contemporary research in algorithmic randomness.

\subsection{Our main results, in context}\label{erPRfG7zRa}
We review some standard notation and terminology. 
The positions in the binary expansion of a real (to the right of the decimal point)  are
numbered $1,2,3,\dots$ from left to right. 
Given a real $\alpha$, we let $\alpha(t)$ 
denote the value of the digit of $\alpha$
at position $t+1$ of its binary expansion.
We identify infinite binary sequences with binary expansions of reals in $[0,1]$ and with subsets
of the natural numbers $\Nat$. Subsets of $\Nat$ are denoted by upper-case latin letters while 
lower-case Greek letters are used for the reals in $[0,1]$ 
(the fact that dyadic rationals have two expansions will not be an issue here).
The  \emph{use}-function corresponding to an oracle computation of a set $A$, is the function whose value on argument $n$ is the length of the initial segment of the oracle tape that is read during the computation of the first $n$ bits of $A$. 
Given $A\subseteq\Nat$, we let $A\restr_n$ denote the initial segment of the characteristic function of $A$ of length $n$. The $n$-bit prefix of a real number $\alpha$ is 
also denoted by $\alpha\restr_n$. Some background material on algorithmic randomness is given in
Section \ref{ejNBNcNbF}.

\begin{defi}[Use of computations]
Given a function $h$, we say that a set $A$ is computable from a set $B$ with use bounded by $h$
if there exists an oracle Turing machine which, given any $n$ and $B\restr_{h(n)}$ in the oracle tape,
outputs the first $n$ bits of $A$.
\end{defi}

Use functions are typically assumed to be nondecreasing.
We say that a function $f:\Nat\to\Nat$ is \rce if there is a computable function 
$f_0:\Nat\times\Nat\to\Nat$ which is non-increasing in the second argument and such that
$f(n)=\lim_s f_0(n,s)$ for each $n$.
Our results connect use-functions with {\em information content measures}, as these were
defined by Chaitin \cite{chaitinincomp}. These are \rce functions $f$ with the property that
$\sum_i 2^{-f(i)}$ converges. This notion is fundamental in the theory of 
algorithmic randomness, and is the same as
the \ce discrete semi-measures of Levin \cite{Lev74tra}, which were also used in Solomonoff
\cite{Solomonoff:64}. For example, \pf complexity can be characterized as a minimal
information content measure (by \cite{chaitinincomp,Lev74tra}).
Throughout this paper the reader may consider $\log n$ as a typical example
 of a computable non-decreasing function $f$
 such that $\sum_i 2^{-f(i)}$ diverges, 
and $2\log n$ or $(\epsilon +1)\log n $ for any $\epsilon>0$ as
 an example of a function $f$ such that $\sum_i 2^{-f(i)}$ converges.

Our results address computations of (a) \ce sets and (b) \ce reals. In a certain sense 
\ce reals can be seen as compressed versions
of \ce sets. Indeed, as we explain at the beginning of
 Section \ref{XF1vJKLG4s}, a \ce real $\alpha$ can be coded into
a \ce set $X_{\alpha}$ in such a way that $X_{\alpha}\restr_{2^n}$ can compute $\alpha\restr_n$
and $\alpha\restr_n$ can compute $X_{\alpha}\restr_{2^n}$.
So it is not surprising that our results for the two cases are analogous.
Computations of \ce sets (from $\Omega$) 
correspond to use-functions $g$ which are information content measures,
while computations of \ce reals correspond to use-functions of the form $n+g(n)$, where
$g$ is an information content measure.

\begin{thm}[Computing c.e.\  reals]\label{yJAbgkLtG2}
Let $h$ be computable and such that $h(n)-n$ is non-decreasing.
\begin{enumerate}[(1)]
\item If $\sum_n 2^{n-h(n)}$ converges, then for any $\Omega$-number $\Omega$ and any c.e.\ real $\alpha$, $\alpha$ can be computed from $\Omega$ with use bounded by $h$.
This also holds without the monotonicity assumption on $h(n)-n$.  
\item If $\sum_n 2^{n-h(n)}$ diverges  
then no $\Omega$-number can compute all c.e.\  reals
with use $h+\bigo{1}$. In fact, in this case there exist two c.e.\  reals such 
that no c.e.\  real can compute both of them with use $h+\bigo{1}$.
\end{enumerate}
\end{thm}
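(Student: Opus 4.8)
The plan is to treat both directions through a single quantity, the total \emph{demand} $\sum_n 2^{n-h(n)}=\sum_n 2^{-(h(n)-n)}$, and to read convergence as saying that this demand can be \emph{funded} against $\Omega$ (using that $\Omega$ is a Solovay-complete, \ml random \lcer), while divergence says the demand is unboundedly large and must eventually defeat any purported common reduction.

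For part (1) I would first invoke the Solovay completeness of $\Omega$: every \ce real $\alpha$ satisfies $\alpha\le_S\Omega$, so there are simultaneous increasing approximations with $\alpha-\alpha_s\le c\,(\Omega-\Omega_s)$ for a constant $c$. I then build the functional by a request/axiom construction: whenever the approximation to $\alpha\restr_n$ changes at stage $s$, I enumerate an axiom declaring that on oracle prefix $\Omega_s\restr_{h(n)}$ the output at argument $n$ is $\alpha_s\restr_n$. The only threat to consistency is a \emph{carry}, where $\alpha\restr_n$ changes while $\Omega\restr_{h(n)}$ has not yet moved, so that one oracle string would be asked for two outputs. To control this I reserve, for each change at scale $n$, a block of oracle measure $2^{-h(n)}$; since $\alpha\restr_n$ changes at most $2^n$ times, the total reserved measure is at most $\sum_n 2^n\cdot 2^{-h(n)}=\sum_n 2^{n-h(n)}$, which is finite \emph{exactly} by the convergence hypothesis. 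This finiteness is what lets me fund the reservations: because $\Omega$ dominates, in the Solovay sense, the left-c.e. real given by the cumulative demand, by the time a fresh axiom for scale $n$ is required $\Omega$ has already advanced past the previously reserved block, so the true value of $\Omega$ receives only correct axioms and the limiting computation is $\alpha\restr_n$ with use $\le h(n)$. Only the value of the sum enters, so monotonicity of $h(n)-n$ is not needed here.

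For part (2) I would first note why two reals are unavoidable: any single \ce real computes itself with use $n\le h(n)$, so no single real can be hard. I therefore construct two \ce reals $\alpha,\beta$ by a diagonalisation defeating every quadruple $(\gamma,\Phi,\Psi,c)$, where $\gamma$ ranges over approximations to \ce reals and $\Phi,\Psi$ are functionals with $\Phi^\gamma=\alpha$ and $\Psi^\gamma=\beta$ under use $h+\bigo{1}$. Each requirement is met at a fresh large scale $n$: there I drive $\alpha\restr_n$ and $\beta\restr_n$ through imposed changes, and each change compels $\gamma\restr_{h(n)+c}$ to take a new value, i.e.\ forces the monotone real $\gamma$ to cross a fresh multiple of $2^{-(h(n)+c)}$. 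The intended device is to use the two reals \emph{in alternation} — advancing $\beta$ only while $\alpha$ is frozen and conversely — so that freezing one real pins $\gamma\restr_{h(n)+c}$ down enough that moving the other must spend genuinely new oracle mass. The divergence of $\sum_n 2^{-(h(n)-n)}$ is then meant to supply a sufficient reservoir of scales of small redundancy $h(n)-n$, at which $\gamma$ has too little room to track, monotonically and simultaneously, the patterns imposed on both $\alpha$ and $\beta$; this should expose a stage at which $\gamma\restr_{h(n)+c}$ cannot validate the demanded values of both prefixes, contradicting that $\gamma$ computes the two reals.

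The hard part will be exactly this accumulation/independence step in (2). For a monotone \lcer a single increase can validate changes at many scales at once, so the forced increases do not add up for free; this is the over-counting that a single real can never escape, its total variation being $\le 1$ and hence yielding only an $\bigo{1}$ bound. The technical heart is therefore to make the two-real alternation precise — proving that, under left-c.e.\ monotonicity of $\gamma$, the $\beta$-forcing after an $\alpha$-forcing demands \emph{new} prefixes rather than being absorbed — and to verify that divergence really delivers enough usable scales to close the diagonalisation. By contrast, the analogous subtlety in (1), that every reservation can be funded, is comparatively mild, since there the finite total demand is matched against $\Omega$ directly through Solovay completeness.
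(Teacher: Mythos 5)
There are genuine gaps in both clauses, and in each case the missing ingredient is precisely the quantitative device the paper is built around. In clause (1), the funding step fails as stated: Solovay completeness gives a constant $c$ with $\delta-\delta_s\leq c\,(\Omega-\Omega_s)$ for your cumulative demand real $\delta$, so a reservation of $2^{-h(n)}$ entered at stage $s$ guarantees only a future increase of $\Omega$ by $2^{-h(n)}/c$; since an increase smaller than $2^{-m}$ need never alter the first $m$ bits of $\Omega$, this forces a change only in $\Omega\restr_{h(n)+\log c}$, i.e.\ it yields use $h+\bigo{1}$, not $h$ --- and the theorem asserts use exactly $h$ (this sharpness is what makes clause (2), stated with $h+\bigo{1}$, a tight dichotomy). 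Nor can you rescale the reservations to absorb $c$: the Solovay constant becomes available only after the demand real is fixed, and changing the weights changes the real and hence the constant, so the bootstrap is circular. (Your phrase ``by the time a fresh axiom is required $\Omega$ has already advanced past the reserved block'' is also unsupported --- domination controls the total future increase, not its timing --- though eventual change would suffice for a Turing reduction; the constant is the real obstruction.) The paper avoids all of this by using the $1$-randomness of $\Omega$ quantitatively rather than domination: whenever the leftmost change of $\alpha$ is at position $n$, the current prefix $\Omega_{s+1}\restr_{n+g(n)}$ (with $g(n)=h(n)-n$) is enumerated into a Solovay test of weight at most $\sum_n 2^{n}\cdot 2^{-n-g(n)}<\infty$; since $\Omega$ passes every Solovay test, all but finitely many of these prefixes are falsified by a later change of $\Omega\restr_{h(n)}$ itself, with no constant lost, and this argument needs no monotonicity of $h(n)-n$. (The paper also gives a second proof via coding $\alpha$ into a c.e.\ set and applying its theorem on computing c.e.\ sets together with the condensation lemma.)

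In clause (2) your architecture --- alternating $\alpha$- and $\beta$-changes against a monotone, least-effort $\gamma$, with divergence supplying the scales --- matches the paper's $h$-load process, and your observation that a single real cannot witness the failure is correct. But the step you explicitly defer, the accumulation estimate, is the entire mathematical content, and it is genuinely delicate rather than routine: the paper notes that the naive accounting, namely that the process on $(k,k+n]$ forces $\gamma\geq 2^{-k}\sum_{i\in(k,k+n]}2^{-g(i)}$, is \emph{false}, exactly because of the absorption phenomenon you flag (one increase of $\gamma$ can validate requests at many scales simultaneously). The correct bound requires decomposing $g$ into its signature $(c_j,I_j)$ (the maximal intervals on which $g$ is constant), proving the exact atomic computation $\gamma=n\cdot 2^{-k-c}$ when $g\equiv c$ on the attack interval, and then a double induction showing that the final value of $\gamma$ realizes the truncated sums $S_k(t)$, in which each block's contribution is truncated to an integer multiple of $2^{-c_j}$ before the next block is added; a separate lemma shows $S_k(t)\geq\sum_{i\leq t}|I_{k-i}|\cdot 2^{-c_{k-i}}-1$, so the truncation loss is only an additive constant and divergence still drives $\gamma$ past $1$ on suitably chosen requirement intervals $J_e$. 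Without this (or an equivalent) quantitative lemma, ``divergence really delivers enough usable scales'' is an assertion, not a proof. A smaller stylistic point: the paper removes the $+\bigo{1}$ at the outset via a space lemma (replace $g$ by $f$ with $\lim_n(f(n)-g(n))=\infty$ and $\sum_n 2^{-f(n)}=\infty$), which is cleaner than carrying the constant $c$ through every requirement as your quadruples do, though that choice by itself is harmless.
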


A considerable amount of work has been devoted to the study of computations 
with use-function $n\mapsto n+c$ for some constant $c$. Note that these are a specific
case of the second clause of Theorem \ref{yJAbgkLtG2}.
These types of reductions are called computably-Lipschitz, and were introduced
by \DHL in \cite{MR2030512} and have been studied in 
\cite{DingY04,DBLP:conf/cie/Barmpalias05, jflBaiasL06,BL05.5,BLibT,MR2286414}
as well as more recently in \cite{IOPORT.05678491,apal/Day10,Merklebarmp, Losert15}.
The second clause of Theorem \ref{yJAbgkLtG2} generalises the result of
Yu and Ding in \cite{DingY04} which assumes that $h(n)=n+\bigo{1}$.
It would be an interesting project to generalise some of the work that has been done for
computably-Lipschitz reductions to Turing-reductions with computable use $h$ such that 
$\sum_i 2^{n-h(n)}$ diverges. We believe that in many cases this is possible.
The second clause of Theorem \ref{yJAbgkLtG2} can be viewed as an example of this larger 
project.

If, as we discussed earlier, \ce reals are just compressed versions of \ce sets, then 
what are the \ce sets that correspond to $\Omega$ numbers?
In Section \ref{ZOJynPYcM2} (with the proofs deferred to Section \ref{XF1vJKLG4s}) 
we discuss this duality and point out that the linearly-complete  \ce sets
could be seen as the analogues of $\Omega$ in the \ce sets.
Recall that the linearly complete c.e.\ sets are those \ce sets that 
compute all \ce sets with use $\bigo{n}$.\footnote{We note that this is different than what was defined
to be linear reducibility in \cite{MR2286414}, which was later more commonly known as computably Lipschitz reducibility.}
A natural example of a linearly-complete set is the set of non-random strings, 
$\{\sigma\ |\ C(\sigma)<|\sigma|\}$ (where $C(\sigma)$ 
denotes the plain Kolmogorov complexity of
the string $\sigma$) which was introduced by Kolmogorov \cite{MR0184801}.
This latter set 
can be viewed as a set of numbers with respect to the bijection
that corresponds to the usual ordering of strings, first by length and then lexicographically.
In \cite{ipl/BarmpaliasHLM13} it was shown that a \ce set $A$ is linearly-complete
if and only if $C(A\restr_n)\geq \log n-c$ for some constant $c$ and all $n$.

\begin{thm}[Computing \ce sets]\label{FmKgYATdsh}
Let $g$ be a computable function.
\begin{enumerate}[(1)]
\item If $\sum_n 2^{-g(n)}$ converges, then every \ce set is computable from any
$\Omega$-number with use $g$. 
\item If $\sum_n 2^{-g(n)}$ diverges and $g$ is nondecreasing, 
then no $\Omega$-number can compute all \ce sets
with use $g$. In fact, in this case, 
no linearly complete \ce set can be computed by any \ce real
with use $g$. 
\end{enumerate}
\end{thm}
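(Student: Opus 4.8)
The plan is to treat the two clauses separately: the convergent case is a direct coding construction, while the divergent case is a diagonalisation whose engine is the divergence of the series.

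For clause (1) I would first reduce to the case where $g$ is nondecreasing (the essential case, in keeping with the paper's convention that use functions are nondecreasing), and then choose $N$ large and hard-code $A\restr_N$ into the reduction, so that only the tail of $\sum_n 2^{-g(n)}$ matters. The idea is to tie the enumeration of a given \ce set $A$ to the growth of $\Omega$: each time an element $k$ enters $A$, I enumerate a fresh halting computation into an auxiliary \pf machine $M$ on a program of length $g(k+1)-c$, where $c$ is the coding constant of $M$ inside the universal machine $U$. By universality this forces $\Omega=\Omega_U$ to increase by at least $2^{-g(k+1)}$. The total weight requested is $2^{c}\sum_{n>N}2^{-g(n)}$, which I arrange to be at most $1$ by the choice of $N$, so $M$ is a legitimate \pf machine by the Kraft--Chaitin theorem. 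To compute $A\restr_n$ from $\Omega\restr_{g(n)}$ I read off the rational $q=0.(\Omega\restr_{g(n)})$, wait for the least stage $s$ with $\Omega_s\ge q$ (so that $\Omega-\Omega_s<2^{-g(n)}$), and output $A_s\restr_n$. Correctness follows from monotonicity: any element $k<n$ entering $A$ after stage $s$ would push $\Omega$ up by at least $2^{-g(k+1)}\ge 2^{-g(n)}$, contradicting $\Omega-\Omega_s<2^{-g(n)}$. Thus a single increment per element covers every larger position at once, which is exactly what keeps the total weight at $\sum_n 2^{-g(n)}$ rather than $\sum_n n\,2^{-g(n)}$.

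For clause (2) it suffices to prove the stronger ``in fact'' statement, since every $\Omega$-number is a \ce real and every \lincom set is a \ce set. So suppose for a contradiction that some \ce real $\beta$ computes some \lincom set $L$ with use $g$, where $g$ is nondecreasing and $\sum_n 2^{-g(n)}=\infty$. By linear completeness (recalled above) every \ce set $W$ satisfies $W\restr_n=\Psi^{L\restr_{dn}}$ for some functional $\Psi$ and constant $d=d_W$; composing with the computation of $L$ from $\beta$ shows that $\beta$ computes $W$ with use $n\mapsto g(dn)$. A short calculation using that $g$ is nondecreasing shows $\sum_n 2^{-g(dn)}=\infty$ for every fixed $d$. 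Hence it is enough to build a single \ce set $W$ that is computable by no \ce real with use $n\mapsto g(dn)$, for any constant $d$: applying this to the constant $d_W$ attached to the reduction $W\le_T L$ then yields the contradiction. The diagonalisation meets the requirements $R_{e,d}$ asserting ``$\Phi_e^{\beta_e\restr_{g(dn)}}\neq W\restr_n$ for some $n$'', over all pairs of a \ce real $\beta_e$ (with a fixed approximation) together with a functional $\Phi_e$, and all constants $d$. To satisfy a single $R_{e,d}$ I exploit divergence: I reserve a finite block of fresh positions $n_1<\dots<n_j$ with $\sum_i 2^{-g(dn_i)}>1$, and for each $n_i$ I enumerate a number below $n_i$ into $W$ so as to create a disagreement with the current value of $\Phi_e^{\beta_e\restr_{g(dn_i)}}$. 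If the reduction ever fails to recover, $R_{e,d}$ is met; otherwise each recovery forces $\beta_e\restr_{g(dn_i)}$, and hence $\beta_e$, upward by at least $2^{-g(dn_i)}$, and summing over $i$ the total increase exceeds $1$, impossible for a real in $[0,1]$.

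The delicate point, and the step I expect to be the main obstacle, is precisely this mass-counting in clause (2) under the constraints of the construction. A single change of $W\restr_{n_i}$ can be realised only by putting a number below $n_i$ into $W$, which simultaneously perturbs $W\restr_m$ for all $m>n_i$, and the requirements must be organised so that these perturbations do not destroy one another's blocks. I would therefore assign disjoint intervals of positions to the requirements, run a finite-injury priority argument, and argue that the total $\beta$-mass charged across the entire construction is still bounded by $\beta\le 1$, so that the divergence of $\sum_n 2^{-g(dn)}$ genuinely overflows what the opponent reduction can pay. By comparison clause (1) is routine once the monotonicity normalisation is in place; the only care needed there is absorbing the universality constant $c$, which is why it is the tail of $\sum_n 2^{-g(n)}$, and not the whole sum, that must be driven below $1$.
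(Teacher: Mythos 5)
Your proposal is correct in outline but takes a genuinely different route from the paper on both clauses. For clause (1), you force $\Omega$ upward directly by injecting fresh mass of size $2^{-g(k+1)}$ into the domain of the universal machine whenever $k$ enters $A$; the paper instead enumerates $N$-descriptions of the \emph{current} prefixes $\Omega_{s+1}\restr_{g(n)}$ of length exactly $g(n)$ and uses 1-randomness (so that $K_N(\Omega\restr_m)>m$ for almost all $m$) to conclude that every such guess is eventually wrong, whence the oracle prefix must change. Your variant needs two caveats the paper's avoids: the coding constant $c$ must be known in advance (recursion theorem, or the standard Kraft--Chaitin index conventions), and the mass-forcing step requires universality \emph{by adjunction} --- with mere optimality the simulating $U$-program's output might already have a short description and no new mass is forced, so you should make the outputs of your auxiliary machine pairwise distinct and fresh, or invoke adjunction explicitly. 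Also, your preliminary ``reduction to the nondecreasing case'' is neither available in general (one cannot effectively monotonize $g$, since the convergence of $\sum_n 2^{-g(n)}$ need not be effective) nor needed: read bitwise --- computing $A(k)$ from $\Omega\restr_{g(k+1)}$ --- your argument already works for arbitrary computable $g$, which is exactly how the paper handles the non-monotone case. For clause (2) your route is cleaner than the paper's in one respect: the paper first treats halting sets of Kolmogorov numberings via the recursion theorem (machinery it reuses for Theorem \ref{1OZQrzfDmp}), and in the general case diagonalizes against \emph{nonuniform} reductions $K(W\restr_{2^n}\ |\ \alpha_e\restr_{g(2^n)})\leq c$, which costs $2^c$ changes per forced increase and requires the condensation-based partition into blocks of weight $>2^c$ for every constant $c$. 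Your composition through the definition of linear completeness yields a genuine Turing functional with use $n\mapsto g(dn)$, so uniform diagonalization with blocks of weight $>1$ and a single change per level suffices; your calculation that $\sum_n 2^{-g(dn)}=\infty$ for each fixed $d$ is correct, and the quantifier structure (build $W$ first, against all triples $(\Phi_e,\beta_e,d)$, then specialize to $d=d_W$) is sound.

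The one genuine gap is in the step you yourself flag, and it is not where you locate it. Enumerating below $n_i$ and thereby perturbing $W\restr_m$ for all larger $m$ is the \emph{benign} direction: extra perturbations only force the opponent to make additional oracle changes. What actually bites is carry cancellation: the claim ``each recovery forces $\beta_e$ up by $2^{-g(dn_i)}$, and summing over $i$ the total exceeds $1$'' is false unless you fix the order of attack. If you attack finest-to-coarsest, a least-effort opponent pays $2^{-m_j}$, then only $2^{-m_{j-1}}-2^{-m_j}$ (a carry), and so on, for a total of just $2^{-m_1}$, where $m_i=g(dn_i)$. You must attack coarsest-to-finest: enumerate $p_i\in(n_{i-1},n_i)$ in increasing order of $i$, waiting for full recovery on all levels of the block between steps. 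Then, writing $v_i$ for the dyadic rational $0.(\beta_e\restr_{m_i})$ at the $i$-th recovery stage, monotonicity of the approximation gives $v_i\geq 0.(\beta_e\restr_{m_i})[t_{i-1}]+2^{-m_i}$, and since finer prefixes dominate coarser ones as dyadic rationals and $m_{i-1}\leq m_i$, we get $v_i\geq v_{i-1}+2^{-m_i}$; telescoping yields $\beta_e\geq\sum_i 2^{-m_i}>1$, the desired contradiction. This is precisely the phenomenon the paper isolates for its Theorem \ref{yJAbgkLtG2}(2) (the ``false amplification lower bound'' \eqref{BRHqKK3T8u}); in your setting it is tamed by the fact that a set position can be enumerated only once, so the ascending schedule collects the full sum. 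With that ordering supplied, your disjoint-blocks finite-injury arrangement is routine and the proof goes through.
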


Theorem \ref{FmKgYATdsh} is an analogue of
Theorem \ref{yJAbgkLtG2} for computations of \ce sets from $\Omega$.
Beyond the apparent analogy, the two theorems are significantly connected.
Theorem \ref{FmKgYATdsh} will be used in the proof of Theorem \ref{yJAbgkLtG2}, and this
justifies our approach of studying \ce reals as compressed versions of \ce sets. 

A natural class of a linearly complete c.e.\  sets 
are the halting sets, with respect to a {\em Kolmogorov numbering}.
A numbering $(M_e)$ of plain Turing machines is a Kolmogorov numbering if
there exists a computable function $f$ and a machine $U$ such that
$U(f(e,n))\simeq M_e(n)$ for each $e,n$ (where `$\simeq$' means that either both sides are defined
and are equal, or both sides are undefined) and for each $e$ the function $n\mapsto f(e,n)$ is
$\bigo{n}$. Kolmogorov numberings, in a sense, correspond to optimal simulations of Turing machines 
and are the basis for the theory of plain Kolmogorov complexity,
and have been studied in \cite{Lynch1974143,mst/Schnorr75}.

The following result can be viewed in the context of
Solovay functions, which stem from Solovay's work \cite{Solovay:75} on $\Omega$-numbers
and were extensively studied by \BD  \cite{DBLP:conf/stacs/BienvenuD09}, 
\HKM \cite{holzlMerkKra09}, and \BMN
\cite{BienvenuMN11}. A Solovay function is a computable information content measure $g$
such that $\sum_i 2^{-g(i)}$ is 1-random. As we discussed above, every information content measure
is an upper bound on \pf Kolmogorov complexity (modulo an additive constant).
By \cite{BienvenuMN11}, Solovay functions $g$ are exactly the computable {\em tight upper bounds} of
the \pf complexity function $n\mapsto K(n)$, \ie a computable function $g$ is a Solovay function
if and only if $K(n)\leq g(n)+\bigo{1}$ for all $n$ and $g(n)\leq K(n)+\bigo{1}$ for infinitely many $n$.

\begin{thm}[Solovay functions as tight upper bounds on oracle use]\label{1OZQrzfDmp}
Let $g$ be a nondecreasing computable function such that 
$\sum_n 2^{-g(n)}$ converges to a 1-random real. Then
a \ce real is 1-random if and only if it computes the halting problem with 
respect to a Kolmogorov numbering with use $g$.
\end{thm}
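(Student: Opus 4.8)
The plan is to prove the two implications separately, using the reformulation (standard for \ce reals, going back to Solovay, Calude et al., and \KS) that a \ce real is 1-random if and only if it is Solovay complete, together with the fact that Solovay reducibility $\leq_S$ transfers 1-randomness upward: if $\beta\leq_S\alpha$ and $\beta$ is 1-random, then so is $\alpha$. Throughout I write $\beta=\sum_n 2^{-g(n)}$, which by hypothesis is a 1-random \ce real, and let $H$ denote the halting set of the fixed Kolmogorov numbering, which is linearly complete.

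For the forward direction, suppose $\alpha$ is a 1-random \ce real. By the \KS theorem every 1-random \ce real is an $\Omega$-number, and since $\sum_n 2^{-g(n)}$ converges, Theorem~\ref{FmKgYATdsh}(1) applies: every \ce set, and in particular $H$, is computed from any $\Omega$-number---hence from $\alpha$---with use bounded by $g$. This is exactly the required reduction, so this direction is immediate given the earlier results.

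The substance of the theorem is the reverse direction: assuming a functional $\Phi$ with $\Phi^\alpha=H$ and use bounded by $g$, I want to conclude that $\alpha$ is 1-random. The strategy is to establish $\beta\leq_S\alpha$ and then invoke upward transfer of randomness. To build the reduction I would realise $\beta$ as the halting probability $\Omega_M$ of a \pfm $M$ having, for each $n$, a single program of length $g(n)$ (which we may arrange, as $\sum_n 2^{-g(n)}$ converges), so that the final value $\Omega_M$ equals $\beta$ irrespective of the order in which programs are enumerated. I am then free to time the halting of the programs of $M$, and I would do so in lockstep with the increases of $\alpha$ that are forced as elements enter $H$ and the use-$g$ computations are corrected, maintaining a savings account so that the tail mass $\beta-\beta_s$ of the approximation never exceeds a fixed multiple $c(\alpha-\alpha_s)$ of the remaining increase of $\alpha$; this inequality, holding at every stage, is precisely $\beta\leq_S\alpha$.

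The main obstacle is quantitative control of the forced increases. When a single number enters $H$, correction of the use-$g$ computation only guarantees that $\alpha\restr_{g(n)}$ changes, and a monotone \ce real can cross a dyadic boundary at position $g(n)$ with an arbitrarily small actual increase; so one cannot charge the weight $2^{-g(n)}$ directly to this event. I would circumvent this by exploiting the \emph{linear completeness} of $H$: rather than relying on the ambient enumeration of $H$, I route a self-constructed \ce set through the linear reduction into $H$ and thence through $\Phi$, using the freedom to choose which markers to enumerate so as to force, for the positions actually needed, genuine increments of $\alpha$ of order $2^{-g(n)}$, with the residual fractional losses absorbed into the constant $c$ by an amortised argument. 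The 1-randomness of $\beta$ enters only at the very end, to promote $\beta\leq_S\alpha$ to the conclusion that $\alpha$ is 1-random; conversely, had the increments of $\alpha$ ever failed to keep pace with the tail of $\beta$, the resulting fast approximation to $\beta$ would contradict its randomness, and this tension is what makes the construction go through.
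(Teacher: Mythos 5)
Your skeleton agrees with the paper's: the easy direction is exactly as you say (\KS plus Theorem \ref{FmKgYATdsh}(1)), and for the hard direction the paper likewise manufactures a Solovay reduction from a 1-random \ce real up to $\alpha$ and transfers randomness upward via \eqref{pI7DoArmo}. But your outline has two genuine gaps at the core of the hard direction. The first is the forcing mechanism. Linear completeness of $H$ gives reductions \emph{from} $H$, which by itself provides no way to cause changes \emph{in} $H$ at linearly controlled positions, so ``routing a self-constructed \ce set through the linear reduction into $H$'' is not yet an argument. The paper cashes this out with the recursion theorem applied to the Kolmogorov numbering: one builds a machine $M$ and assumes $M=M_b$, so that defining $M(n)\downarrow$ puts $f(b,n)$ into $H$ with $f(b,n)\leq 2^{c-1}\cdot n$, and one only acts after $\Phi^{\alpha}$ has confirmed $H$ up to the relevant point. (For general linearly complete $H$ one can instead use $K(W\restr_n\ |\ H\restr_n)=\bigo{1}$ and count the $2^c$ short descriptions, as in the second proof of Theorem \ref{FmKgYATdsh}(2), but your sketch names neither device.)

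The second gap is quantitative and is where the proposal as stated fails. Enumerating $n$ forces a change only in $\alpha\restr_{g(f(b,n))}$ with $f(b,n)\approx 2^{c-1}n$, so the secured quantum is $2^{-g(2^{c-1}n)}$, which falls short of the weight $2^{-g(n)}$ you plan to release by a factor $2^{g(2^{c-1}n)-g(n)}$; this exponent is \emph{not} $\bigo{1}$ in general, since a nondecreasing computable $g$ with $\sum_n 2^{-g(n)}$ 1-random can have arbitrarily large one-step jumps (replacing the arguments taking a value $v$ by $2^{b-v}$ arguments taking a value $b>v$ leaves the sum unchanged, so jumps can be grafted in freely). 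Hence the invariant $\beta-\beta_s\leq c\cdot(\alpha-\alpha_s)$ cannot be maintained with your per-$n$ schedule, for any constant $c$. Moreover no choice of markers forces a ``genuine increment of order $2^{-g(n)}$'' for a \emph{single} position: a monotone real can cross a boundary at length $m$ with arbitrarily small increase, and the only remedy is amortization over many changes at the \emph{same} truncation length ($k$ changes of $\alpha\restr_m$ yield total increase at least $(k-1)\cdot 2^{-m}$). The paper obtains both fixes at once from the condensation structure: it works block-wise on $I_t=[2^t-1,2^{t+1}-1)$, extracting $2^t$ changes of $\alpha\restr_{g(2^{t+c})}$ and hence an increase of about $2^{t-g(2^{t+c})}$, and then compares $\alpha$ not with $\beta$ but with $\delta=\sum_t 2^{t+c-g(2^{t+c})}$, whose 1-randomness is precisely what the second half of Lemma \ref{eq:condtest} supplies. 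That randomness-preservation under condensation (or some equivalent regrouping of $\beta$) is the pivot missing from your outline; with it your machine $M$ would simply be re-targeted at $\delta$, and the argument becomes the paper's.
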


This result is another way to see the halting sets with respect to Kolmogorov numberings
as analogues of Chaitin's $\Omega$ in the \ce sets. Moreover, it gives a characterization of
monotone Solovay functions in terms of the oracle-use for certain computations between
\ce sets and \ce reals.

Finally we obtain a gap theorem characterizing the array computable degrees,
in the spirit of Kummer \cite{DBLP:journals/siamcomp/Kummer96}. 
Recall from \cite{DJS2} that a Turing degree $\mathbf{a}$ is array computable if there function $f$
which is computable from the halting problem with a computable upper bound on the use of the oracle,
and which dominates every function $g$ which is computable in $\mathbf{a}$.  Array computable degrees  play a significant role in classical computability theory.
Kummer showed that they also relate to initial segment complexity.
He showed that the initial segment plain complexity $C(A\restr_n)$ of
every \ce set $A$ in an array computable degree  is bounded above (modulo
an additive constant) by any monotone 
function $f$ such that
$\lim_n(f(n)-\log n)=\infty$. On the other hand, he also showed that every array noncomputable degree
contains a \ce set $A$ such that $C(A\restr_n)$ is not bounded above by any function $f$
such that $\lim_n (2\log n-f(n))=\infty$.
This is known as Kummer's gap theorem (see \cite[Section 16.1]{rodenisbook}) since it 
characterizes array computability of the \ce degrees in terms of a logarithmic gap on the 
initial segment plain complexity.
The following is another logarithmic 
gap theorem characterizing array computability in the \ce degrees, but this time
the gap refers to the oracle use in computations from Chaitin's omega.

\begin{thm}\label{gGr7djFsLI}
Let $\mathbf{a}$ be a \ce Turing degree and let $\Omega$ be (an instance of)
Chaitin's halting probability.
\begin{enumerate}[(1)]
\item If $\mathbf{a}$ is array computable, then every real in $\mathbf{a}$
is computable from $\Omega$ with identity use.
\item Otherwise there exists a \ce real in $\mathbf{a}$ which is not computable from $\Omega$
with use $n+\log n$.
\end{enumerate}
\end{thm}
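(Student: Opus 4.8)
The plan is to treat the two clauses separately, but in both the governing quantity is the series $\sum_n 2^{n-h(n)}$ from Theorem~\ref{yJAbgkLtG2}: identity use is $h(n)=n$, where $\sum_n 2^{0}$ diverges, and use $n+\log n$ is $h(n)-n=\log n$, where $\sum_n 2^{-\log n}=\sum_n 1/n$ diverges. So \emph{generically} both sit on the divergent side, and the role of the array (non)computability hypothesis is precisely to push a given degree across the threshold — downward, to recover identity use, in clause (1), and to locate a genuine diagonaliser inside $\mathbf{a}$ in clause (2).

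For clause (1) I would first record that $\Omega\restr_n$ is self-certifying: waiting for the increasing approximation $\Omega_s$ to reach the rational $0.\Omega(0)\cdots\Omega(n-1)$ computes from $\Omega\restr_n$, with use exactly $n$, a stage $T_n$ by which every halting computation contributing to the first $n$ bits has settled. I then reduce computing $\alpha\restr_n$ to a request (Kraft--Chaitin) argument in the spirit of the positive directions of Theorems~\ref{yJAbgkLtG2} and \ref{FmKgYATdsh}: fix a $\Delta^0_2$ approximation $\alpha_s$ to $\alpha$ induced by a c.e.\ set $A\in\mathbf{a}$, and each time the prefix $\alpha_s\restr_n$ changes, issue a request forcing $\Omega$ up by $2^{-n}$, so that the final value of $\Omega\restr_n$ pins down $\alpha\restr_n$. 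The total weight issued is $\sum_n 2^{-n}\cdot(\text{number of changes of }\alpha_s\restr_n)$, and the scheme succeeds exactly when this series converges. Array computability enters here: the change-count $n\mapsto(\text{number of changes of }\alpha_s\restr_n)$ is $\le_T\mathbf{a}$, hence dominated by an $\omega$-c.e.\ function, and I would use this domination (together with the exponentially generous decoding above, so that $n$ bits of $\Omega$ certify settling far out) to force the cost series to converge, yielding a computation of $\alpha$ from $\Omega$ with identity use.

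For clause (2), $\mathbf{a}$ is array noncomputable, and I would build a c.e.\ real $\alpha$ of degree $\mathbf{a}$ defeating every functional $\Psi_e$ that purports to compute $\alpha$ from $\Omega$ with use $n+\log n$. The diagonalisation is the measure-theoretic one behind Theorem~\ref{yJAbgkLtG2}(2): to injure $\Psi_e$ I drive $\alpha$ upward through roughly $2^n$ increments of size $2^{-n}$ near level $n$, each of which — after $\Psi_e$ has committed using $\Omega\restr_{n+\log n}$ — forces $\Omega$ to increase by at least $2^{-(n+\log n)}=2^{-n}/n$, so the total forced increase at level $n$ is about $2^n\cdot 2^{-n}/n=1/n$; since $\sum_n 1/n$ diverges while $\Omega\le 1$ caps the available measure, $\Psi_e$ must eventually err or exceed the use. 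To place $\alpha$ in $\mathbf{a}$ I would code a c.e.\ set of degree $\mathbf{a}$ into sparse bits of $\alpha$ (securing $\mathbf{a}\le_T\alpha$) and release the diagonalisation increments only under permission from $\mathbf{a}$ (securing $\alpha\le_T\mathbf{a}$); array noncomputability supplies the \emph{multiple permitting} — a function $\le_T\mathbf{a}$ escaping domination by every $\omega$-c.e.\ function — guaranteeing that enough permissions arrive quickly enough.

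The main obstacle is the quantitative matching in each clause. In clause (1) it is verifying that array computability genuinely forces $\sum_n 2^{-n}\cdot(\text{number of changes of }\alpha_s\restr_n)$ to converge: the $\mathbf{a}$-computable change-count is only \emph{dominated} by an $\omega$-c.e.\ function, not bounded by a computable one, so the delicate point is that, filtered through the self-certifying decoding of $\Omega$, this domination still makes the weight summable against $2^{-n}$ — this is exactly the step that distinguishes identity use from the generic divergent case. In clause (2) the obstacle is synchronising two conflicting schedules: the measure budget $2^{-n}/n$ dictates how many increments are needed at each level to overflow $\Omega$, while permitting only releases an increment when $\mathbf{a}$ allows, so the multiple-permitting apparatus must be calibrated so that the supply of permissions at level $n$ keeps pace with the demand dictated by the divergence of $\sum_n 1/n$, all the while keeping $\alpha\le_T\mathbf{a}$.
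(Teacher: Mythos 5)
Your clause (2) argument has a fatal quantitative flaw: for a \emph{single} c.e.\ real the measure accounting is damping, not amplifying. To make $N_n$ changes of $\alpha\restr_n$ in a monotone approximation you must spend at least $N_n\cdot 2^{-n}$ of $\alpha$'s own measure, so your budget constraint is $\sum_n N_n 2^{-n}\leq 1$; meanwhile the total increase you force on $\Omega$ is at most $\sum_n N_n 2^{-n-\log n}=\sum_n (N_n2^{-n})/n\leq \sum_n N_n2^{-n}\leq 1$. So $\Omega\leq 1$ is \emph{never} violated: attacking level $n$ with $2^n$ increments of size $2^{-n}$ exhausts all of $\alpha$'s measure while forcing $\Omega$ up by only about $1/n$, and the divergence of $\sum_n 1/n$ cannot be harvested because you cannot afford more than (essentially) one full level. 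This is why the paper, following Yu--Ding, constructs \emph{two} reals $\alpha,\beta$ and plays the alternating $h$-load game: the alternation prevents the opponent $\gamma$ from reusing carries (a single real cannot do this, since $\gamma$ may simply mirror $\alpha$ --- indeed $\alpha$ computes itself with identity use), and it is exactly this alternation that produces genuine amplification, with $\gamma$'s forced increase exceeding the players' expenditure. Even then the naive estimate fails --- the paper explicitly flags \eqref{BRHqKK3T8u} as a \emph{false} lower bound, which is why the truncation machinery and Corollary \ref{ffj4W2aE1B} are needed. The paper's actual route to clause (2) is: graft array-noncomputable (multiple) permitting, as in \cite[Section 4.3]{IOPORT.05678491}, onto the two-real construction of Theorem \ref{yJAbgkLtG2}(2) with $h(n)=n+\log n$, placing \emph{both} reals in $\mathbf{a}$; since $\Omega$ is a single c.e.\ real it must fail to compute one of them, and that one witnesses the claim. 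Your permitting plan is in the right spirit but is attached to a construction that cannot succeed.

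Your clause (1) sketch is in the right spirit (cost counting plus a Kraft--Chaitin/1-randomness settling argument), but the step you yourself flag as delicate is genuinely unresolved and, as stated, false: domination of the change-count $n\mapsto\#\{s:\alpha_s\restr_n\neq\alpha_{s+1}\restr_n\}$ by an $\omega$-c.e.\ function gives no convergence of $\sum_n 2^{-n}\cdot(\text{change count})$, since $\omega$-c.e.\ functions may be as large as $2^n$, making the series diverge. The missing quantitative ingredient is the result the paper imports from \cite[Section 4.2]{IOPORT.05678491}: if $\alpha$ lies in an array computable c.e.\ degree, then $\alpha$ is computable \emph{with identity use} from a c.e.\ real of the special form $\sum_n g(n)2^{-n}$, where $g=\bigo{n}$ has a nondecreasing computable approximation; one then checks that $\Omega$ computes any such real with identity use (here the total request weight is controlled by $\sum_n g(n)2^{-n}<\infty$, which is what replaces your unsupported summability claim). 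Minor point: one does not ``force $\Omega$ up'' by issuing requests; one enumerates descriptions of the current approximation $\Omega_s\restr_n$ into an auxiliary prefix-free machine and invokes 1-randomness of $\Omega$ to conclude that only finitely many such descriptions are prefixes of $\Omega$, exactly as in the proof of Theorem \ref{FmKgYATdsh}(1) --- but that is repairable boilerplate, unlike the two gaps above.
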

This result is largely a corollary of the work in
\cite[Sections 4.2, 4.3]{IOPORT.05678491} applied to
the construction in the proof of
Theorem \ref{yJAbgkLtG2} (1).

\subsection{Omega numbers and completeness}\label{ZOJynPYcM2}
This paper concerns certain aspects of completeness of $\Omega$, so
it seems appropriate to discuss the completeness of $\Omega$ a little more generally.
The fact that $\Omega$ computes all \ce reals is referred to as the  \emph{Turing-completeness} of $\Omega$
(with respect to c.e.\  reals),
and has been known since $\Omega$ was first defined. Calude and Nies observed in \cite{CaludeN97}
that there is a computable bound on the oracle use in computations of c.e.\  reals and \ce sets 
from $\Omega$.
The results of Section 
\ref{erPRfG7zRa} give a sharp characterisation of these computable bounds.
It is a reasonable question as to whether $\Omega$-numbers can be characterised as the complete
c.e.\  reals with respect to Turing reductions with appropriately bounded use.
In terms of the strong reducibilities  of classical computability theory, this question has a negative answer.
\FSW showed in \cite{jc/FigueiraSW06}, for example,  that there are two $\Omega$-numbers 
which do not have
the same truth-table degree. Stephan (see \cite[Section 6]{IOPORT.05678491} for a proof) showed
that given any $\Omega$-number, there is another $\Omega$-number which is not computable from the
first  with use $n+\bigo{1}$.

Putting this question aside, there are a number of characterisations of the $\Omega$-numbers
as the complete elements of the set of c.e.\  reals with respect to  certain (weak) reducibilities
relating to algorithmic randomness.
\DHL studied and compared these and other reducibilities in \cite{MR2030512}, and a good
presentation of this study can also be found in the monograph of Downey and
Hirschfeldt \cite[Chapter 9]{rodenisbook}.
One of these reducibilities that pertains to the following discussion is the $rK$ reducibility,
denoted by $A \leq_{rK} B$ which is the relation that $K(A\restr_n\ |\ B\restr_n)=\bigo{1}$ for all $n$.
We note that by \DHL \cite{MR2030512}, this relation implies that $A$ is computable in $B$, in the sense of
Turing oracle computability.
The following fact highlights the correspondence between $\Omega$-numbers and linearly complete 
\ce sets. Here $K(\Omega\restr_n\ |\ H\restr_{g(n)})$ denotes the prefix-free complexity of
$\Omega\restr_n$ relative to $H\restr_{g(n)}$, and
$K(\Omega\restr_n\ |\ H\restr_{g(n)})=\bigo{1}$ can be seen as a nonuniform computation
of $\Omega$ from $H$ (see Section \ref{ejNBNcNbF} for definitions relating to complexity).

\begin{thm}\label{6MsEpxPF1}
Consider a 1-random c.e.\  real $\Omega$, a \ce set $H$, a computable function $g$ and a 
\rce function $f$ such that $\sum_i 2^{-f(i)}$ is finite. 
\begin{enumerate}[\hspace{0.5cm}(i)]
\item If $K(\Omega\restr_n\ |\ H\restr_{g(n)})=\bigo{1}$ then $2^n=\bigo{g}$.
\item If $g=\bigo{2^n}$ and  $K(\Omega\restr_{n+f(n)}\ |\ H\restr_{g(n)})=\bigo{1}$ then $H$ is
linearly-complete.
\item If $g=\bigo{n}$ and  $K(\Omega\restr_{f(n)}\ |\ H\restr_{g(n)})=\bigo{1}$ then $H$ is
linearly-complete.
\item If $H$ is linearly-complete then 
it computes $\Omega$ (and any \ce real) with use $2^{n+\bigo{1}}$.
\end{enumerate}
\end{thm}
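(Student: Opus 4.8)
My proof would rest on two facts recalled above together with the coding of \ce reals as \ce sets from the start of Section~\ref{XF1vJKLG4s}. The first fact is the characterisation of \cite{ipl/BarmpaliasHLM13}: a \ces $A$ is \lincom exactly when $C(A\restr_n)\geq\log n-\bigoo$ for all $n$. The second is that every information content measure bounds \pf complexity, so that $K(i)\leq f(i)+\bigoo$. Under the coding, $\alpha\restr_n$ is uniformly computable from $X_\alpha\restr_{2^n}$ and conversely. I would use throughout that, since $\Omega$ is $1$-random, $K(\Omega\restr_n)\geq n-\bigoo$, and that, as $H$ is a \cesn, the initial segment $H\restr_m$ is pinned down by $m$ together with the count $|H\cap[0,m)|\leq m$, so that its description costs only about $\log m$ bits of genuine content.

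Clause (iv) is the easiest, and the plan is to push linear completeness through the coding. Since $H$ computes every \cesn, in particular $X_\Omega$, with use $\bigo{m}$, one recovers $X_\Omega\restr_{2^n}$ from $H\restr_{c\cdot 2^n}$ for a fixed constant $c$, and then $\Omega\restr_n$ from $X_\Omega\restr_{2^n}$; as $c\cdot 2^n=2^{n+\bigoo}$ this is the claimed use, and the argument is unchanged for any \ce real in place of $\Omega$.

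Clause (i) is a counting argument. The hypothesis $K(\Omega\restr_n\mid H\restr_{g(n)})=\bigoo$ reconstructs $\Omega\restr_n$ from $H\restr_{g(n)}$ with $\bigoo$ advice, so $K(\Omega\restr_n)\leq K(H\restr_{g(n)})+\bigoo$. Combining the \ce description bound $K(H\restr_{g(n)})\leq\log g(n)+\bigo{\log n}$ (computability of $g$ recovers the length $g(n)$, after which only the count must be named) with $K(\Omega\restr_n)\geq n-\bigoo$ forces $\log g(n)\geq n-\bigo{\log n}$, so that $g$ grows at least exponentially. Tightening the additive overhead to the sharp bound $2^n=\bigo{g}$ is then a matter of describing $H\restr_{g(n)}$ more economically, using a conditional or weighted description so that essentially only the $\log g(n)$ bits of content are paid for.

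Clauses (ii) and (iii) are the substantive directions, and the plan is to verify the complexity characterisation of linear completeness for $H$. In each case the hypothesis produces an initial segment of $\Omega$ computable from $H\restr_{g(n)}$ with $\bigoo$ advice, so $C(H\restr_{g(n)})\geq C(\Omega\restr_{k(n)})-\bigoo$, with $k(n)=n+f(n)$ in (ii) and $k(n)=f(n)$ in (iii); since $\Omega$ is random, $C(\Omega\restr_k)$ is $k$ up to lower order. The two range hypotheses on $g$ are calibrated to these two choices: when $g=\bigo{2^n}$ one has $\log g(n)\leq n+\bigoo$, so the surplus $f(n)$ over $n$ is exactly what is needed to clear the threshold $\log g(n)$; when $g=\bigo{n}$ one has $\log g(n)\leq\log n+\bigoo$, and now $f(n)$ alone clears it. In both cases the role of $f$ being an information content measure, through $K(i)\leq f(i)+\bigoo$, is to let the overhead terms be absorbed. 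The main obstacle is that the characterisation demands $C(H\restr_m)\geq\log m-\bigoo$ at \emph{every} length $m$, while the hypothesis controls only the sampled lengths $g(n)$. The plan is to interpolate: for $g(n)\leq m<g(n+1)$ recover $H\restr_{g(n)}$ from $H\restr_m$ at the cost of naming $g(n)$, and check, using the monotonicity and growth bounds on $g$ and the slack supplied by $f$, that this cost stays within the margin and that the plain-versus-\pf conversions do not consume the logarithmic gap.
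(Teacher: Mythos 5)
Your clause (iv) is exactly the paper's argument (push linear completeness through the coding $X_\alpha$, using that $\alpha\restr_n$ is computable from $X_\alpha\restr_{2^n}$), but the plans for clauses (i)--(iii) have a genuine gap: every link in your static complexity chains loses an unbounded additive term, while the theorem's conclusions are $\bigoo$-tight. For clause (i), any static description of $H\restr_{g(n)}$ must name the length as well as the count, so what you actually get is $K(\Omega\restr_n)\le \log g(n)+K(n)+\bigoo$, i.e.\ $g(n)\ge 2^{n-K(n)-\bigoo}$, and the $K(n)$ loss cannot be ``tightened away'' by conditioning: for \emph{every} real $X$ and every $k$ there is an $n$ (the integer whose binary digits are $1X(0)\cdots X(k-2)$) with $K(X\restr_n\mid n)\le n-k+\bigoo$, so even the conditional bound $K(\Omega\restr_n\mid n)\ge n-\bigoo$ is false, and no argument that merely compares descriptions of $\Omega\restr_n$ and $H\restr_{g(n)}$ can reach $2^n=\bigo{g}$. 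The paper's proof is instead dynamic: it picks levels $n_k$ with $g(n_k)<2^{n_k-k-d}$ and builds a Martin-L\"{o}f test, counting that each enumeration into $V_k$ requires a fresh description of length $<d$ relative to the current approximation of $H\restr_{g(n_k)}$, that at most $2^d$ such descriptions fit per oracle configuration, and that $H\restr_{g(n_k)}$ changes at most $g(n_k)$ times \emph{because $H$ is c.e.}; hence $V_k$ has measure at most $2^{-k}$, and the cost of the index $k$ is paid by the test's measure budget rather than by any description of $n_k$. This change-counting use of the enumerability of $H$ is the idea your proposal is missing.

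The same problem defeats the route to (ii) and (iii) via the characterisation $C(H\restr_m)\ge\log m-\bigoo$. The hypothesis only yields $K(H\restr_{g(n)})\ge n+f(n)-\bigoo$ (resp.\ $f(n)-\bigoo$ in (iii)), and an information content measure may dip: take $f(n)=\tfrac{1}{2}\log n$ for $n=4^k$ and $f(n)=2\log n$ otherwise, which is computable with $\sum_n 2^{-f(n)}<\infty$. At the dip points the static lower bound falls short of $\log g(n)-\bigoo$ by about $\tfrac{1}{2}\log n$, and your conversion steps ($K$ versus $C$, naming $g(n)$ during interpolation) each cost further logarithmic terms, so the $\bigoo$ precision demanded by the characterisation of \cite{ipl/BarmpaliasHLM13} is unreachable along this path. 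The paper never proves pointwise complexity bounds on $H$ at all: for each c.e.\ set $W$ it enumerates a Solovay test $(I_k)$ --- in clause (ii) split into $2^k$ cells $I_k(i)$ so that level $k$ has weight at most $2^{c-f(k)}$ --- and uses only the \emph{convergence} of $\sum_k 2^{-f(k)}$, which is insensitive to dips of $f$. Since $\Omega$ passes the test, for almost all $k$ every enumeration of $2^k+i$ into $W$ forces a change of $H\restr_{g(k)}$ afterwards, and this directly \emph{is} the reduction of $W$ to $H$ with linear use. To repair your proposal you would have to replace the static inequalities by such dynamic arguments, in which the enumerability of $H$ bounds the number of oracle configurations and the randomness of $\Omega$ is spent on a test whose total weight, not pointwise weight, is controlled by $f$; as written, clauses (i)--(iii) do not go through.
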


In order to understand this fact better, note that if $\Omega$ is computed by
a \ce set $H$ with computable 
use $h$, then we have $K(\Omega\restr_n\ |\ H\restr_{h(n)})=\bigo{1}$ for all
$n$. So the first clause of 
Theorem \ref{6MsEpxPF1} says that the 
tightest use in a computation of $\Omega$ from a \ce set is
exponential, even when the computation is done non-uniformly.
The fourth clause asserts that the linearly-complete \ce sets achieve this lower bound
on the use in computations of $\Omega$. In this sense, they can be viewed as
analogues of $\Omega$. There are more facts supporting this suggestion.  
In \cite{ipl/BarmpaliasHLM13}, for example,  it was shown that a \ce set $H$ is linearly complete if and only if
$K(W\restr_n\ |\ H\restr_n)=\bigo{1}$ for all $n$ and all \ce sets $W$. 
By the basic properties of Kolmogorov complexity, \pf complexity in
the previous statement and in Theorem \ref{6MsEpxPF1} can be replaced by plain
Kolmogorov complexity (\eg see \cite{MR2030512}).

It is also instructive to compare clauses (ii) and (iii) of Theorem \ref{6MsEpxPF1} with
an older result of Solovay.  For each $n$ let $D_n$ be the set of strings of length at most $n$,
in the domain of the universal \pfmn. Solovay \cite{Solovay:75}
(see  \cite[Section 3.13]{rodenisbook}) showed that the cardinality of $D_n$ is proportional to 
$2^{n-K(n)}$, that 
$K(\Omega\restr_n\ |\ D_{n+K(n)})=\bigo{1}$ 
and that $K(D_n\ |\ \Omega\restr_n)=\bigo{1}$.

We conclude this section with a brief discussion of some results from Tadaki \cite{Tadaki:2009_72}
which came to our attention only recently. Tadaki's results are incomparable with those presented here, but relate to the results from this paper concerning reductions between $\Omega$ and c.e.\ sets (rather than c.e.\  reals).   He
considered the problem of how many bits of $\Omega$ are needed in order to compute
the domain of the universal \pf machine $U$ up to the strings of length $n$; and vice-versa, what is the
least number $m$ such that, in general, we can compute the first $n$ bits of $\Omega$ from the
domain of $U$ restricted to the strings of length at most $m$.
Tadaki showed that, if we restrict the question to computable use-functions, 
\begin{enumerate}[\hspace{0.5cm}(a)]
\item the answer to the first question is given by the computable functions of the type $n-f(n)+\bigo{1}$ 
such that $\sum_n 2^{-f(n)}$ is finite.
\item the answer to the second question is given by the computable functions of the type $n+f(n)+\bigo{1}$ 
such that $\sum_n 2^{-f(n)}$ is finite.
\end{enumerate}
These optimal results are quite pleasing, and were subsequently used by Tadaki \cite{tadaki2011rep}
in order to obtain a statistical mechanics interpretation of algorithmic information theory, which
was further developed in \cite{DBLP:conf/itw/Tadaki11, DBLP:journals/mscs/Tadaki12}.
 The main difference here is that
the \ce sets considered by Tadaki are the domains of universal \pf machines, and these are
not linearly-complete (a fact which is not hard to verify).

\subsection{Preliminaries}\label{ejNBNcNbF}
We assume the reader is familiar with the basic concepts of Kolmogorov complexity and computability theory.
We use $C(\sigma)$ and $K(\sigma)$ to denote the plain and 
\pf Kolmogorov complexity of a string $\sigma$ respectively. Moreover we use
$C(\sigma\ |\ \rho), K(\sigma\ |\ \rho)$ to denote  the relative plain and \pf complexities of $\sigma$ 
with respect to $\rho$ respectively.
Recall that $A$ is 1-random if  there exists a constant $c$
such that $K(A\restr_n)\geq n-c$ for all $n$.
The cumulative work of Solovay \cite{Solovay:75},
\CHKW \cite{Calude.Hertling.ea:01} and \KS \cite{Kucera.Slaman:01}
has  shown that 
the 1-random c.e.\  reals are exactly the halting probabilities of
(optimal) universal machines.
Most of this work revolves around a reducibility between \ce reals that was introduced in 
\cite{Solovay:75}. We say that a \ce real $\alpha$ is Solovay reducible to \ce real $\beta$ if
there are nondecreasing computable 
rational approximations $(\alpha_s), (\beta_s)$ to $\alpha,\beta$ respectively, and a constant $c$ such
that $\alpha-\alpha_s\leq c\cdot (\beta-\beta_s)$ for all $s$. \DHN \cite{DHN} showed that 
\begin{equation}\label{pI7DoArmo}
\parbox{14.5cm}{$\alpha$
is Solovay reducible to $\beta$ iff   
there are nondecreasing computable 
rational approximations $(\alpha_s), (\beta_s)$ to $\alpha,\beta$ respectively, and a constant $c$ such
that $\alpha_{s+1}-\alpha_s\leq c\cdot (\beta_{s+1}-\beta_s)$ for all $s$.}
\end{equation}
Moreover it follows from
\cite{Solovay:75,Calude.Hertling.ea:01,Kucera.Slaman:01} that the 1-random \ce 
reals are exactly the \ce reals that are complete (\ie they are above all other \ce reals) with respect to
Solovay reducibility. In particular, if $\alpha$ is a 1-random \ce real which is Solovay reducible to 
another \ce real $\beta$, then $\beta$ is also 1-random. We use these facts in the proof
of Theorem \ref{FmKgYATdsh}
in Section \ref{iX9NpUznYr}.
Without loss of generality, all \ce reals considered in this paper belong to $[0,1]$.
The following convergence test will be used  throughout this paper.
\begin{lem}[Condensation]\label{eq:condtest}
If $f$ is a nonincreasing sequence of positive reals then the series
$\sum_i f(i)$ converges if and only if
the series $\sum_i \big(2^i\cdot f(2^i)\big)$ converges.
Moreover, if $f$ is computable and the two sums converge, the first sum is 1-random
if and only if the second sum is 1-random.
\end{lem}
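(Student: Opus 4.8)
The plan is to treat the two assertions separately: the convergence equivalence is just the classical Cauchy condensation test, while the 1-randomness equivalence I would deduce from the theory of Solovay reducibility recalled in Section~\ref{ejNBNcNbF}. Throughout, write $\alpha=\sum_i f(i)$ and $\beta=\sum_i 2^i f(2^i)$.

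For the convergence equivalence, I would group the terms of $\alpha$ into the dyadic blocks $[2^m+1,\,2^{m+1}]$, each of which contains exactly $2^m$ terms. Since $f$ is nonincreasing, each such block lies between $2^m f(2^{m+1})$ and $2^m f(2^m)$. Summing over $m$ then yields, up to the constant factor $2$, a two-sided comparison between the partial sums of $\alpha$ and those of $\beta$, so that one sum is finite if and only if the other is. This part uses neither computability nor any monotone-approximation hypothesis.

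For the 1-randomness equivalence I would show that $\alpha$ and $\beta$ are Solovay equivalent, i.e.\ $\alpha\le_S\beta$ and $\beta\le_S\alpha$, and then invoke the fact recalled in Section~\ref{ejNBNcNbF} that Solovay reducibility into a \ce real preserves 1-randomness: if a 1-random \ce real is Solovay reducible to a \ce real, the latter is 1-random too. Since $f$ is computable with nondecreasing positive values, both $\alpha$ and $\beta$ are \lce reals, and the partial sums below are nondecreasing computable approximations suitable for checking Solovay reducibility against the definition in Section~\ref{ejNBNcNbF}. The key is to synchronise the two approximations with a suitable index shift so that the block inequalities become tail inequalities. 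For $\alpha\le_S\beta$ I would take $\alpha_n=\sum_{i\le 2^{n+1}}f(i)$ and $\beta_n=\sum_{j\le n}2^j f(2^j)$; the upper block bound gives $\alpha-\alpha_n=\sum_{i>2^{n+1}}f(i)\le\sum_{m\ge n+1}2^m f(2^m)=\beta-\beta_n$, so $\alpha\le_S\beta$ with constant $1$. For $\beta\le_S\alpha$ I would instead pair $\beta_n=\sum_{j\le n}2^j f(2^j)$ with $\tilde{\alpha}_n=\sum_{i\le 2^n}f(i)$; the lower block bound $2^m f(2^m)\le 2\sum_{i=2^{m-1}+1}^{2^m}f(i)$ yields $\beta-\beta_n\le 2\sum_{i>2^n}f(i)=2(\alpha-\tilde{\alpha}_n)$, so $\beta\le_S\alpha$ with constant $2$. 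Combining both reductions with the preservation fact shows $\alpha$ is 1-random if and only if $\beta$ is.

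The Cauchy condensation bookkeeping is routine; the only genuine obstacle is arranging the two Solovay reductions correctly. The subtlety is that a naive synchronisation, comparing the tail of $\alpha$ beyond $2^n$ with the tail of $\beta$ beyond $n$, fails because of the leftover term $2^n f(2^n)$, so one must shift the index in the $\alpha$-approximation (comparing beyond $2^{n+1}$ with beyond $n$) in the forward direction, while the reverse direction exploits that each dyadic block of $\alpha$ contains $2^{m-1}$ terms, producing the factor $2$. Once the increments are matched in this way, the existence of the required approximations and constants for Solovay reducibility is immediate, and the 1-randomness transfer follows from the quoted facts about \ce reals.
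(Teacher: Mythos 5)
Your proof is correct and takes essentially the same route as the paper's: the paper also establishes the two-sided tail comparison $\sum_{i=2^t}^{\infty} f(i)\leq \sum_{i=t}^{\infty} \big(2^i\cdot f(2^i)\big)\leq 2\cdot\sum_{i=2^{t-1}}^{\infty} f(i)$ and concludes that the two sums are c.e.\ reals, each Solovay reducible to the other, so that 1-randomness transfers by the facts quoted in Section~\ref{ejNBNcNbF}. Your write-up merely makes explicit the approximation sequences and the constants $1$ and $2$ that the paper leaves implicit (and your phrase ``nondecreasing positive values'' should read ``nonincreasing''---positivity is what makes the partial sums nondecreasing).
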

\begin{proof}
The first part is a standard series convergence test known as the Cauchy condensation test.
Its proof usually goes along the lines of Oresme's
proof of the divergence of the harmonic series which shows that
\[
\sum_{i=2^t}^{\infty} f(i)\leq
\sum_{i=t}^{\infty} \big(2^i\cdot f(2^i)\big)=
2 \cdot \sum_{i=t}^{\infty}\big(2^{i-1} \cdot f(2^i)\big)
\leq 2\cdot \sum_{i=2^{t-1}}^{\infty} f(i)
\hspace{0.5cm}
\textrm{for all $t\in \Nat^+$.}
\]
 The second part of the lemma follows from the above bounds,
which show that the two sums are \ce reals, each one Solovay reducible to the other.
So one is 1-random if and only if the other is.
 \end{proof}

Algorithmic randomness for reals was originally defined in terms of effective
statistical tests by \ml in \cite{MR0223179}. We assume that the reader is familiar with this classical
definition.
Another kind of test that can be used for the definition of 1-randomness is the Solovay test.
A Solovay test is a uniform sequence $(I_i)$ of $\Sigma^0_1$ classes
(often viewed as a uniformly \ce sequence of sets of binary strings) such that the sum of the measures
of the sets $I_i$ is finite. We say that a real passes a Solovay test $(I_i)$ if there are only finitely many
indices $i$ such that $I_i$ contains a prefix of $X$. Solovay  \cite{Solovay:75} showed that
a real is 1-random if and only if it passes all Solovay tests.

Some of the following proofs involve the construction of prefix-free machines.  
There are certain notions
and tools associated with such constructions which are standard in the literature, and which we briefly review now.
The {\em weight} of a prefix-free set $S$ of strings 
is defined
to be the sum $\sum_{\sigma\in S}  2^{-|\sigma|}$. The
{\em weight} of a prefix-free machine $M$ is 
defined to be the weight of its domain. 
Prefix-free machines are most often built in terms of {\em request sets}.
A request set $L$ is a set of pairs $\langle \rho, \ell\rangle$ where $\rho$ is a string
and $\ell$ is a positive integer. A `request' $\langle \rho, \ell\rangle$
represents the intention of describing $\rho$
with a string of length $\ell$. We
define the {\em weight of the request} $\langle \rho, \ell\rangle$ to be $2^{-\ell}$.
We say that $L$ is a {\em bounded request set}
if  the sum of the weights of the requests in $L$ is less than 1.
The Kraft-Chaitin theorem
(see e.g.\ \cite[Section 2.6]{rodenisbook}) says that for every bounded request set $L$
which is c.e., there exists a prefix-free machine $M$ with the property that for 
each $\langle \rho, \ell\rangle\in L$
there exists a string $\tau$ of length $\ell$ such that $M(\tau)=\rho$.
So the dynamic construction of a prefix-free machine can be reduced to a mere
description of a corresponding c.e.\ bounded request set.

For more background in algorithmic information theory and computability theory
we refer to the standard monographs \cite{MR1438307,rodenisbook, Ottobook}.
Since we have included a number of citations to the
unpublished manuscript of Solovay \cite{Solovay:75}, we note that every result in this manuscript
has been included, with a proof, in the monograph by Downey and Hirschfeldt \cite{rodenisbook}.

\section{Omega and the computably enumerable sets}\label{XF1vJKLG4s}
We start with the proof of Theorem \ref{6MsEpxPF1}
and continue with the proof of Theorem \ref{FmKgYATdsh}.
The proof of the last clause of Theorem \ref{6MsEpxPF1}
in Section \ref{RwuumeDoKH} describes a compact coding of a \ce real into a \ce set, which
will also be used in later arguments in this paper.

\subsection{Proof of Theorem \ref{6MsEpxPF1}, clauses (i) and (ii)}
Recall the following theorem.
\begin{repthm}{6MsEpxPF1}[Clauses (i) and (ii)]
Consider a 1-random c.e.\  real $\Omega$, a \ce set $H$, a computable function $g$ and a 
\rce function $f$ such that $\sum_i 2^{-f(i)}$ is finite. 
\begin{enumerate}[\hspace{0.5cm}(i)]
\item If $K(\Omega\restr_n\ |\ H\restr_{g(n)})=\bigo{1}$ then $2^n=\bigo{g}$.
\item If $g=\bigo{2^n}$ and  $K(\Omega\restr_{n+f(n)}\ |\ H\restr_{g(n)})=\bigo{1}$ then $H$ is
linearly-complete.
\end{enumerate}
\end{repthm}

For the first clause, it suffices to show that for each c.e.\ real $\alpha$, each \ce set $H$ and each
computable function $g$: 
\[
\parbox{12cm}{if $K(\alpha\restr_n\ |\ H\restr_{g(n)})=\bigo{1}$ and $2^n\neq\bigo{g}$ then
$\alpha$ is not 1-random.}
\]
Assuming the above properties regarding $H,\alpha$ and $g$, we construct a \ml test $(V_i)$ 
such that $\alpha\in\cap_i V_i$.
Let $d$ be a constant such that
$K(\alpha\restr_n\ |\ H\restr_{g(n)})<d$ for all $n$.
Since $2^n\neq\bigo{g}$, for each constant $c$ there exists some $n$ such that
$g(n)<2^{n-c}$. Let $(n_i)$ be an increasing sequence with 
$g(n_i)<2^{n_i-i-d}$ for each $i$. 
We describe a construction enumerating the sets $V_k$.  We say that $V_k$ requires attention 
at stage $s+1$ if  $K_{s+1}(\alpha_{s+1}\restr_{n_k}\ |\ H_{s+1}\restr_{g(n_k)})<d$
and either $V_k[s]$ is empty or
the last string enumerated into $V_k$ is not a prefix of $\alpha_{s+1}$.
At each stage $s+1$ we pick the least $k$ such that $V_k$ requires attention and enumerate
$\alpha_{s+1}\restr_{n_k}$ into $V_k$. In order to see that $(V_i)$ is a \ml test, note that each time we enumerate into $V_k$,  we are guaranteed
a change in $\alpha\restr_{n_k}$. 
Moreover, after at most $2^d$ such consecutive enumerations, there must be a change in $H\restr_{g(n_k)}$.
This is true, because after each such enumeration,  $V_k$ can only require attention if the underlying
universal machine issues a new description of $\alpha\restr_{n_k}$ relative to $H\restr_{g(n_k)}$, of
length less than $d$; moreover the underlying machine 
can only issue at most $2^d$ such descriptions as long as the approximation to $H\restr_{g(n_k)}$
does not change.
Since $g(n_k)<2^{n_k-k-d}$ and $H$ is a \ce set, 
there can be at most $2^{n_k-k-d}$ changes in $H\restr_{g(n_k)}$.
It follows that there can be at most  $2^d\cdot 2^{n_k-k-d}=2^{n_k-k}$ many enumerations into $V_k$, the last enumeration into this set being 
 a prefix of $\alpha$.
Since each string in $V_k$ has length $n_k$, the measure of $V_k$ is 
at most $2^{-k}$.
So $(V_k)$ is a \ml test with $\alpha\in\cap_i V_i$, 
demonstrating that $\alpha$ is not 1-random.

For the second clause, 
let $W$ be a \ce set and assume the hypothesis of the second clause of the theorem.  
Since $g=\bigo{2^n}$ it suffices to show that for all but finitely many $k$, and for all $i<2^k$
we can compute $W(2^k+i)$ uniformly from $H\restr_{g(k)}$.
Let $c$ be a constant such that $K(\Omega\restr_{f(k)+k}\ |\ H\restr_{g(k)})< c$ for all $k$.
We enumerate a Solovay test $(I_k)$ as follows. 
We define the sets $I_k(i)$ for each $k\in\Nat$ and $i<2^k$, 
and for each $k$ we let $I_k$ be the union of all
$I_k(i)$, $i<2^k$.
In what follows, when we write $\Omega_{s}\restr_{f(k)+k}$, it is to be understood that the value $f(k)$ referred to, is in fact the approximation to $f(k)$ at stage $s$. 
At any stage $s$ of the construction, for each $k$ and each $i<2^k$, 
let $m_k(i)$ be the first stage $\leq s$ at which a string was enumerated into $I_k(i)$ if such a stage exists, and let $m_k(i)$ be undefined otherwise.
We say that $I_k(i)$ requires attention at
stage $s$ if $i<2^k$, $2^k+i\in W_s$,  
$K_{s}(\Omega_{s}\restr_{f(k)+k}\ |\ H_{s}\restr_{g(k)})< c$ and 
the following two conditions hold:
\begin{itemize}
\item $H_{s}\restr_{g(k)}=H_{m_k(i)}\restr_{g(k)}$ or $m_k(i)$ is undefined;
\item $I_k=\emptyset$ or the last string enumerated into $I_k$ is not a prefix of 
$\Omega_{s}$.
\end{itemize}
We say that $I_k$ requires attention at stage $s$ if  $I_k(i)$ requires attention at stage $s$
for some $i<2^k$. 

At stage $s+1$ 
let $k$ be the least number
such that $I_k$ requires attention. Let $i$ be the least number which is less than $2^k$ 
and such that $I_k(i)$ requires attention, and enumerate
$\Omega_{s+1}\restr_{k+f(k)}$ into $I_k(i)$ (or do nothing if no $I_k$ requires attention). 
This concludes the enumeration of $(I_k)$.

First we show that $(I_k)$ is a Solovay test. 
Fix $k$. If $s_0<s_1$ are two stages at which 
enumerations are made into $I_k$, we have that
$\Omega_{s_0}\restr_{k+f(k)}\neq \Omega_{s_1}\restr_{k+f(k)}$ (if the approximation to $f$ changes, then these are distinct strings).
Now let $i<2^k$.
If $s_0<s_1$ are two stages at which
enumerations into $I_k(i)$ are made then
$K_{s_0}(\Omega_{s_0}\restr_{k+f(k)}\ |\ H_{s_0}\restr_{g(k)})< c$ and 
$K_{s_1}(\Omega_{s_1}\restr_{k+f(k)}\ |\ H_{s_0}\restr_{g(k)})< c$.
So for each new enumeration into $I_k(i)$, we have an additional description
in the universal machine with oracle $H_{m_k(i)}\restr_{g(k)}$
(where $m_k(i)$ is the first stage where an enumeration occurred in $I_k(i)$)
of length less than $c$.
Since there are at most $2^c$ such descriptions, for each $k$ and $i<2^k$ there can be at most $2^c$
enumerations into $I_k(i)$.  So the measure of $I_k(i)$ is at most $2^{c-k-f(k)}$.
Hence the measure of $I_k$ is at most  $2^{c-f(k)}$. From our hypothesis concerning
$f$,  it follows that $(I_k)$ is a Solovay test.

Now we show that $W$ is linearly reducible to $H$. Since $g=\bigo{2^n}$,
it suffices to show that for
all but finitely many $k$, if $i<2^k$ and $2^k+i$ 
is enumerated into $W$ at some stage $s_0$, then there exists a stage
$s_1> s_0$ such that $H_{s_0}\restr_{g(k)}\neq H_{s_1}\restr_{g(k)}$.
Since $\Omega$ is 1-random and each of the sets $I_k(i)$ is a finite set of strings, 
there exist some $k_0$ such that for each $k>k_0$ and each $i<2^{k}$ the
set $I_k(i)$ does not contain any prefix of $\Omega$. Now suppose that some number $2^k+i$ 
with 
$k>k_0$ and $i<k$ is enumerated into $W$ at some stage $s_0$, and towards
a contradiction suppose that $H_{s_0}\restr_{g(k)}= H\restr_{g(k)}$.
Then $I_k(i)$ will require attention at every sufficiently late stage $s$ at which $\Omega_s$ does not have a prefix
in $I_k$.  Eventually $\Omega\restr_{k+f(k)}$ will be enumerated into $I_k(i)$, which contradicts 
the choice of $k_0$. This concludes the proof that $H$ is linearly-complete, and the proof of
the second clause.

\subsection{Proof of Theorem \ref{6MsEpxPF1}, clauses (iii) and (iv)}\label{RwuumeDoKH}
Recall the following theorem.
\begin{repthm}{6MsEpxPF1}[Clauses (iii) and (iv)]
Consider a 1-random c.e.\  real $\Omega$, a \ce set $H$, a computable function $g$ and a 
\rce function $f$ such that $\sum_i 2^{-f(i)}$ is finite. 
\begin{enumerate}[\hspace{0.5cm}(i)]
\setcounter{enumi}{2}
\item If $g=\bigo{n}$ and  $K(\Omega\restr_{f(n)}\ |\ H\restr_{g(n)})=\bigo{1}$ then $H$ is
linearly-complete.
\item If $H$ is linearly-complete then 
it computes $\Omega$ (and any \ce real) with use $2^{n+\bigo{1}}$.
\end{enumerate}

\end{repthm}

For  clause (iii) the proof is similar to clause (ii).  Let $c$ be a constant such that $g(n)\leq c\cdot n$ and
$K(\Omega\restr_{f(n)}\ |\ H\restr_{g(n)})< c$ for all $n$.
We enumerate a Solovay test $(I_k)$ as follows. 
At any stage $s$ of the construction, for each $k$ let $m_k$ be the first stage $\leq s$ at which  a string was enumerated into $I_k$ if such a stage exists, and let $m_k$ be undefined otherwise.
We say that $I_k$ requires attention at
stage $s$ if $k\in W_s$,  $K_{s}(\Omega_{s}\restr_{f(k)}\ |\ H_{s}\restr_{g(k)})< c$,
$H_{s}\restr_{g(k)}=H_{m_k}\restr_{g(k)}$ or $m_k$ is undefined, 
and: 
\[
\textrm{$I_k=\emptyset$ or the last string enumerated in $I_k$ is not a prefix of 
$\Omega_{s}$.} 
\]
At stage $s+1$ 
 let $k$ be the least number
such that $I_k$ requires attention and enumerate
$\Omega_{s+1}\restr_{f(k)}$ into $I_k$ (or do nothing if no $I_k$ requires attention). This concludes the enumeration of $(I_k)$.

First we show that $(I_k)$ is a Solovay test. 
If $s_0<s_1$ are two stages at which 
enumerations into  $I_k$ are made, we have
$K_{s_0}(\Omega_{s_0}\restr_{f(k)}\ |\ H_{s_0}\restr_{g(k)})< c$,
$K_{s_1}(\Omega_{s_1}\restr_{f(k)}\ |\ H_{s_0}\restr_{g(k)})< c$
and $\Omega_{s_0}\restr_{f(k)}\neq \Omega_{s_1}\restr_{f(k)}$.
Hence for each new enumeration into $I_k$, we have an additional description
in the universal machine with oracle $H_{m_k}\restr_{g(k)}$
(where $m_k$ is the first stage where an enumeration occurred in $I_k$)
of length less than $c$.
Since there are at most $2^c$ such descriptions, for each $k$ there can be at most $2^c$
enumerations into $I_k$.  So the measure of $I_k$ is at most $2^{c-f(k)}$. By our hypothesis concerning
$f$ it follows that $(I_k)$ is a Solovay test.

Now we show that $W$ is linearly reducible to $H$. It suffices to show that for
all but finitely many $n$, if $n$ is enumerated into $W$ at some stage $s_0$ then there exists a stage
$s_1> s_0$ such that $H_{s_0}\restr_{g(n)}\neq H_{s_1}\restr_{g(n)}$.
Since $\Omega$ is \ml random, there exist some $n_0$ such that for each $n>n_0$ the
set $I_n$ does not contain any prefix of $\Omega$. Suppose that some
$n>n_0$ is enumerated into $W$ at some stage $s_0$,  and towards 
a contradiction suppose that $H_{s_0}\restr_{g(n)}= H\restr_{g(n)}$.
Then $I_n$ will require attention at every sufficiently late stage $s$ at which $\Omega_s$ does not have a prefix
in $I_n$. Eventually $\Omega\restr_{f(n)}$ will be enumerated into $I_n$, which contradicts 
the choice of $n_0$. This concludes the proof that $H$ is linearly-complete, and the proof of
the third clause.

For clause (iv), let us start by examining how $\Omega$ may be coded into a c.e.\ set.
Given a \ce real $\alpha$, there is a computable increasing sequence of rational numbers
$(\alpha_s)$ that converges to $\alpha$. For each $t\in\Nat$ let: 
\begin{eqnarray*}
p_{\alpha}(t)&=&|\{s\ |\ \alpha_s(t)\neq \alpha_{s+1}(t)\ \wedge\ \alpha_{s+1}(t)=1\}|\leq 2^{t}\\
\sigma_{\alpha}(t)&=& 1^{p_{\alpha}(t)}\ast 0^{2^t-p_{\alpha}(t)}.
\end{eqnarray*}
The \ce real $\alpha$ can be coded into a \ce set $X_{\alpha}$ as follows:
\begin{equation}\label{MvpKJLO4s}
X_{\alpha}=\sigma_{\alpha}(0)\ast \sigma_{\alpha}(1)\ast\cdots
\end{equation}

Then clearly $\alpha$ is computable from $X_{\alpha}$ with use bounded by  $2^n$, and $X_{\alpha}$ is
computable from $\alpha$ with use bounded by $\lfloor \log n \rfloor +1$.
From this coding, we can deduce the fourth clause of
the theorem.
Let $\alpha$ be a \ce real and consider $X_{\alpha}$ as above.
Since $H$ is linearly-complete, $X_{\alpha}\restr_{2^n}$ is 
uniformly computable from $H\restr_{2^{n+\bigo{1}}}$. Since $\alpha\restr_n$ 
is uniformly computable from $X_{\alpha}\restr_{2^n}$, it follows that it is also 
uniformly computable from $H\restr_{2^{n+\bigo{1}}}$.

\subsection{Proof of Theorem \ref{FmKgYATdsh}}\label{iX9NpUznYr}
Recall the following theorem.
\begin{repthm}{FmKgYATdsh}
Let $g$ be a computable function.
\begin{enumerate}[(1)]
\item If $\sum_n 2^{-g(n)}$ converges, then every \ce set is computable from any
$\Omega$-number with use $g$. 
\item If $\sum_n 2^{-g(n)}$ diverges and $g$ is nondecreasing, 
then no $\Omega$-number can compute all \ce sets
with use $g$. In fact, in this case, 
no linearly complete \ce set can be computed by any \ce real
with use $g$. 
\end{enumerate}
\end{repthm}

For clause (1), let $A$ be a \ce set and
 let $c$ be a constant such that $\sum_{i>c} 2^{-g(i)}<1$.
We construct a \pfm $N$ using the Kraft-Chaitin theorem as follows.
At stage $s+1$,  
for each $n\in A_{s+1}-A_s$ which is larger than $c$, 
we enumerate an $N$-description of $\Omega_{s+1}\restr_{g(n)}$ 
(chosen by the online Kraft-Chaitin algorithm) of length
$g(n)$. This completes the definition of $N$.
Note that the weight of the domain of $N$ is 
at most $\sum_{n>c} 2^{-g(n)}<1$,  so that the machine $N$ is well defined by the Kraft-Chaitin theorem. 
Since $\Omega$ is 1-random, there exists some $n_0>c$ such that for all $n>n_0$ we have
$K_N(\Omega\restr_{g(n)})>g(n)$. Hence if some $n>n_0$ enters $A$ at some stage $s+1$,
we have that $\Omega_{s+1}\restr_{g(n)}$ is not a prefix of $\Omega$. Since $\Omega$ is a \ce real,
this means that $\Omega$ computes $A$ with use $g$, which
concludes the proof of  clause (1) of Theorem \ref{FmKgYATdsh}.

We first prove clause (2) of Theorem \ref{FmKgYATdsh}
for the case when the linearly-compete set is the halting set with respect to some Kolmogorov numbering.
We do this because the proof of Theorem \ref{1OZQrzfDmp} is based on the same ideas.
On the other hand, the general case can easily be obtained
with some modifications of the argument which we lay out at the end of this section.
So
suppose
that $A$ is the halting
problem with respect to a Kolmogorov numbering $(M_e)$ of Turing machines. 
Let $(\Phi_e, \alpha_e)$ be an effective list of all Turing functionals $\Phi_e$ with use $g$ and 
\ce reals $\alpha_e$.  This effective list comes with nondecreasing computable rational 
approximations $(\alpha_e[s])$ to $\alpha_e$ and effective enumerations
$\Phi_e[s]$ of $\Phi_e$, which are based on the underlying universal machine.
Also let $U$ be the universal machine
whose halting problem is $A$, and let $f$ be a computable function such that for each $e$ the function 
$n\mapsto f(e,n)$ is $\bigo{n}$ and such that
$U(f(e,n))\simeq M_e(n)$ for all $e,n$. 
By definition, we have $A=\{f(e,n)\ |\ M_e(n)\de\}$.
We will define a Turing machine $M$
such that
the following requirements are met:
\[
\mathcal{R}_e:\ A\neq\Phi_e^{\alpha_e}.
\]
Note that $M$ is not directly mentioned in the requirement $\mathcal{R}_e$, but is implicit in the definition of $A$.
By the recursion theorem we can assume given $b$ such that $M=M_b$. During the construction of $M$ we check the enumeration of $A$ to ensure that there does not exist any $n$ with $A(f(b,n))=1$ and for which we have not yet defined $M$ on argument $n$. If such an $n$ is found, then we terminate the construction of $M$ (so that, in fact, no such $n$ can be found at any stage for the correct index $b$). It is also convenient to speed up the enumeration of $A$, so that whenever we define $M$ on argument $n$, $f(b,n)$ is enumerated into $A$ at the next stage.

Since $\sum_i 2^{-g(i)}=\infty$, by Lemma \ref{eq:condtest} we also have
$\sum_n 2^{n-g(2^n)}=\infty$. Let $I_t=[2^t-1,2^{t+1}-1)$ so that $|I_t|=2^t$.
Let $(J_c(e))$ be a computable partition of $\Nat$
into consecutive intervals (i.e.\ such that $J_c(e)$ and $J_{c'}(e')$ are disjoint if $e\neq e'$ or $c\neq c'$), such that:
\begin{equation}\label{rLVla3Ivpd}
\sum_{t\in J_c(e)} 2^{(t+c)-g(2^{(t+c)})}> 2^c
\hspace{0.5cm}\textrm{for each $e,c$}.
\end{equation}
Such a partition exists by the hypothesis for $g$ and
Lemma \ref{eq:condtest}.
Let $M[s]$ denote the machine $M$ as defined by the end of stage $s$, i.e. $M(n)[s]\downarrow$ precisely if an axiom to that effect is enumerated prior to the end of stage $s$. 
For each pair $c,e$ let $j_{c,e}$ be the largest number $j$ such that there exists $t\in J_c(e)$ with $j\in I_t$. 
We say that $R_e$ requires attention with respect to $(c,t)$ at stage $s+1$, if $t$ is the least number in 
$J_c(e)$ such that $M(i)[s]$ is not defined for some $i\in I_t$,
and if also  $\Phi_e^{\alpha_e}(j)[s+1]=A(j)[s+1]$ for all $j\leq e\cdot j_{c,e}$. We say
$\mathcal{R}_e(c)$ requires attention with respect to $t$ at some stage  if $R_e$ requires attention at that stage
with respect to $(c,t)$. 
Let $\tuple{c,e}$ be an effective bijection between $\Nat\times\Nat$ and $\Nat$.
The definition of $M$ is as follows. At stage $s+1$ we look for the least $\tuple{c,e}$ such that
$\mathcal{R}_e(c)$ requires attention. If there exists no such $\tuple{c,e}<s$, we go to the next stage. 
Otherwise let $\tuple{c,e}$ be this tuple and let $t$ be the least number such that
$\mathcal{R}_e(c)$ requires attention with respect to $t$. 
Let $k$ be the least  element of $I_t$ such that $M_s(k)$ is undefined
and define $M_{s+1}(k)=0$. We say that $\mathcal{R}_e(c)$ received attention with respect to $t$
at stage $s+1$. This concludes the definition of $M$.

It remains to verify that each $\mathcal{R}_e$ is met. Towards a contradiction, suppose that $R_e$ is not met.
Let $b$ be an index of the machine $M$ and fix $c$ to be a constant such that
$f(b,n)\leq 2^{c-1}\cdot n$ for all $n$.
By the padding lemma we can assume that $e>2^{c-1}$, which means that if $\mathcal{R}_e(c)$ requires attention at stage $s+1$ then:
\begin{equation}\label{fhJ4lloHiI}
\textrm{$\Phi_e^{\alpha_e}(f(b,i))[s+1]\de=A(f(b,i))[s+1]$ for all $i\in I_t$ and all $t\in J_{c}(e)$.}
\end{equation}
Fix $t\in J_{c}(e)$ and let $s_0< s_1$ be stages at which 
$R_e(c)$ receives  attention with respect to $t$. Since all elements of $I_t$ are less than $2^{t+1}$, $g$ is nondecreasing,  and since we define $M$ on $i\in I_t$ at stage $s_0$, causing $f(b,i)$ to be enumerated into $A$, it follows that we must see an increase in $\alpha_e\restr_{g(2^{c-1}\cdot 2^{t+1})} = \alpha_e \restr_{g(2^{t+c})}$ between stages $s_0$ and $s_1$. 
If $\mathcal{R}_e$ is not satisfied,
as we have assumed, then for each $t\in J_c(e)$ it will receive attention $|I_t|=2^t$ many times, causing a total increase in $\alpha_e\restr_{g(2^{t+c})}$ of $2^{t-g(2^{t+c})}$. 
Summing over all $t\in J_c(e)$ this means that ultimately:
\[
\alpha_{e}\geq \sum_{t\in J_c(e)} 2^{t-g(2^{t+c})}=
2^{-c}\cdot \sum_{t\in J_c(e)} 2^{t+c-g(2^{t+c})}
>2^{-c}\cdot 2^c=1.
\]
This gives the required contradiction, since $\alpha_e\leq 1$. 
We can conclude that each $\mathcal{R}_e$ is met, which completes the proof of
clause (2) of Theorem \ref{FmKgYATdsh} for the case of halting sets.

For the more general case, assume that $A$ is merely a linearly-complete \ce set.
Then for all \ce sets $W$ we have $K(W\restr_n\ |\ A\restr_n)=\bigo{1}$ (see  \cite{ipl/BarmpaliasHLM13}).
As before, let $(\alpha_e)$ be an effective enumeration of all \ce reals in $[0,1]$,
with nondecreasing rational computable approximations $(a_e[s])$ respectively.
We want to show that for each $e$, the real $\alpha_e$ does not compute $A$ with oracle-use
bounded above by $g$. If $\alpha_e$ did compute $A$ in this way, then
we would have $K(A\restr_n\ |\ \alpha_e\restr_{g(n)})=\bigo{1}$ for all $n$, and since $A$ is
linearly complete, it follows from the remark above that for each \ce set $W$ we would have
$K(W\restr_{2^n}\ |\ \alpha_e\restr_{g(2^n)})=\bigo{1}$ for all $n$.
 So
it suffices to enumerate a \ce set $W$ such that
the following conditions are met for each $e,c$:
\[
\mathcal{R}_e(c):\hspace{0.5cm}
\textrm{If for all $n$,\hspace{0.3cm}$K(W\restr_{2^n}\ |\ \alpha_e\restr_{g(2^n)})\leq c$,
\hspace{0.3cm} then \hspace{0.3cm}$\alpha_e> 1$.}
\]
As before
we let $I_t=[2^t-1,2^{t+1}-1)$ for each $t$, so that $|I_t|=2^t$ and all elements of $I_t$ are less than $2^{t+1}$. 
We also define an appropriate version of the intervals $J_c(e)$ as follows. We consider a 
computable partition of
$\Nat$ into finite intervals $J_c(e)$ for $e,c\in\Nat$ such that
\begin{equation}\label{gmvubml2G4}
\sum_{t\in J_c(e)} 2^{t-g(2^{t+1})}>2^c
\hspace{0.5cm}
\textrm{for each $e,c\in\Nat$.}
\end{equation}
As before, such a partition of $\Nat$ exists by  Lemma
\ref{eq:condtest}.
We say that $\mathcal{R}_e(c)$ 
requires attention with respect to $t\in J_c(e)$ 
at stage $s+1$, if  
$I_t- W_s\neq\emptyset$ 
and if  
$K_{s+1}(W_s\restr_{2^{t'+1}}\ |\ \alpha_e[s+1]\restr_{g(2^{t'+1})})\leq c$ for all $t'\in J_c(e)$.
We say that $\mathcal{R}_e(c)$ 
requires attention at stage $s+1$ if it requires attention with respect to some $t\in J_c(e)$. 
The enumeration of $W$ takes place as follows. At stage $s+1$ we look for the least $\tuple{c,e}$ such that
$\mathcal{R}_e(c)$ requires attention. If there exists no such $\tuple{c,e}<s$, we go to the next stage. 
Otherwise let $\tuple{c,e}$ be this tuple and let $t$ be the least number such that
$\mathcal{R}_e(c)$ requires attention with respect to $t$. 
Let $k$ be the least  element of $I_t-W_s$ and enumerate $k$
into $W_{s+1}$. We say that $\mathcal{R}_e(c)$ received attention with respect to $t$
at stage $s+1$. This concludes the enumeration of $W$.

It remains to verify that each $\mathcal{R}_e$ is met. 
Towards a contradiction, suppose that $R_e$ is not met.
Fix $t\in J_{c}(e)$ and for each $i\leq 2^{t-c}$ let $s_i$ be
the stage where
$\mathcal{R}_e(c)$ receives attention with respect to $t$ for the $(2^c\cdot i)$-th time. 
Since whenever
$\mathcal{R}_e(c)$ requires attention with respect to $t$ at some stage $s+1$ we have
$K_{s+1}(W_s\restr_{2^{t+1}}\ |\ \alpha_e[s+1]\restr_{g(2^{t+1})})\leq c$ and there are at most $2^c$ many
descriptions of length $c$, between each $s_i$ and $s_{i+1}$ we must see an increase in $\alpha_e \restr_{g(2^{t+1})}$. 
If $R_e$ is not satisfied,
as we have assumed, then for each $t\in J_c(e)$ it will receive attention $|I_t|=2^t$ many times, meaning an increase in $\alpha_e \restr_{g(2^{t+1})}$ of at least $2^{t-c-g(2^{t+1})}$. 
Summing over all $t\in J_c(e)$ we have that 
\[
\alpha_e \geq \sum_{t\in J_c(e)} 2^{t-c-g(2^{t+1})}=
2^{-c}\cdot \sum_{t\in J_c(e)} 2^{t-g(2^{t+1})}
>2^{-c}\cdot 2^c=1.
\]
This is the required contradiction, since $\alpha\leq 1$. 
We conclude that each $\mathcal{R}_e(c)$ is met, which completes the proof of
clause (2).

\subsection{Proof of Theorem \ref{1OZQrzfDmp}}
Recall the following theorem.
\begin{repthm}{1OZQrzfDmp}
Let $g$ be a nondecreasing computable function such that 
$\sum_n 2^{-g(n)}$ converges to a 1-random real. Then
a \ce real is 1-random if and only if it computes the halting problem with 
respect to a Kolmogorov numbering with use $g$.
\end{repthm}

According to the hypothesis of Theorem \ref{1OZQrzfDmp}
and Lemma \ref{eq:condtest}
we have that
$\sum_i 2^{i-g(2^i)}$ is 1-random. Suppose that $A$ is the halting
problem with respect to a Kolmogorov numbering $(M_e)$ of Turing machines,
 and $\alpha$ is
a \ce real which computes $A$ with use
$g$. 
Let $U$ be the universal machine
whose halting problem is $A$, and let $f$ be a computable function such that
$n\mapsto f(e,n)$ is $\bigo{n}$ and such that
$U(f(e,n))\simeq M_e(n)$ for all $e,n$. 
By definition, we have $A=\{f(e,n)\ |\ M_e(n)\de\}$.
Moreover let $\Phi$ be a Turing functional with oracle use
uniformly bounded by $g(n)$ on each argument $n$, such that
$A=\Phi^{\alpha}$. Fix computable enumerations $(\Phi_s), (A_s)$ of $\Phi, A$ respectively. The proof proceeds much as in the first proof we gave for  clause (2) of Theorem \ref{FmKgYATdsh} previously. 
Once again we construct a machine $M$, and assume by the recursion theorem that we are given $b$ such that $M=M_b$. During the construction of $M$ we check the enumeration of $A$ as before, so as to ensure that there is no $n$ for which $f(b,n)$ is enumerated into $A$ but for which we have not yet defined $M$ on argument $n$, etc.  
The machine $M$ that we construct will be very simple, and it is the timing of enumerations into the domain of $M$ which is key.  Fix $c$ to be a constant such that
$f(b,n)\leq 2^{c-1}\cdot n$ for all $n$.
Again we define $I_t=[2^t-1,2^{t+1}-1)$ so that $|I_t|=2^t$.
This time, however, we say that $I_t$ requires attention at stage $s+1$, if $M(i)[s]$ is not defined for some $i\in I_t$,
and if also  $\Phi^{\alpha}(j)[s+1]=A(j)[s+1]$ for all $j\leq 2^{c-1}2^{t+1}=2^{c+t}$.
For the least $t$ which requires attention at stage $s+1$ (if any), we find the least $i\in I_t$ such that $M(i)[s]\uparrow $ and we define $M(i)\downarrow=0$.

The rough idea is now to define a sequence of stages $s_t$ such that if we define $\gamma_t=\alpha[s_t]$ and $\delta_t= 
\sum_{n=0}^{t-1} 2^{n+c-g(2^{n+c})}$, then 
\begin{equation} \label{want}
2^{c}\cdot(\gamma_{t+1}-\gamma_t)\geq (\delta_{t+1}-\delta_t).
\end{equation}
 According to the characterisation \eqref{pI7DoArmo} of Solovay reducibility, 
 this means that  the limit
$\delta$ of $(\delta_t)$ is Solovay reducible to the limit
 $\alpha$ of $(\gamma_t)$.
Since $\delta$ is 1-random, it then follows that $\alpha$ is 1-random, as required. We define $s_0=0$ and, for each $t>0$, we define $s_t$ to be the first stage at which $I_{t}$ receives attention. 
Each time $I_t$ receives attention, we must see an increase in $\alpha\restr_{g(2^{c-1}\cdot 2^{t+1})} = \alpha\restr_{g(2^{t+c})}$. 
Since $I_t$  will receive attention $|I_t|=2^t$ many times, this means a total increase in $\alpha\restr_{g(2^{t+c})}$ of at least $2^{t-g(2^{t+c})}=2^{-c}2^{t+c-g(2^{t+c})}$. We shall therefore have that (\ref{want}) holds, if we define 
$\gamma_t=\alpha\restr_{g(2^{t+c})}[s_t]$.

\section{Computing \ce reals from Omega numbers}
This section is devoted to the proof of Theorem \ref{yJAbgkLtG2}.
In Section \ref{hvsFRstJIc} we derive the first part of this result from
Theorem \ref{FmKgYATdsh}, while Section 
\ref{XLhNRMuqDV} contains a more sophisticated argument for the proof of the second part.

\subsection{Proof of Theorem \ref{yJAbgkLtG2}, clause (1)}\label{hvsFRstJIc}
Recall the following theorem.
\begin{repthm}{yJAbgkLtG2}[Clause (1)]
Let $h$ be computable.
If $\sum_n 2^{n-h(n)}$ converges, then for any $\Omega$-number $\Omega$ and any c.e.\ real $\alpha$, $\alpha$ can be computed from $\Omega$ with use bounded by $h$. 
\end{repthm}

Let $\Omega$ be an omega number, let $\alpha$ be 
a \ce real and let $g=h(n)-n$
be as in the statement of the first clause.
By the 
proof of clause (iv) of Theorem \ref{6MsEpxPF1}
in Section \ref{RwuumeDoKH}, there exists a \ce
set $A$ which  computes $\alpha$ with use $2^n$.
By Lemma \ref{eq:condtest} 
\begin{equation}\label{8uLwuGAAFK}
\sum_i 2^{-g(i)}<\infty
\Rightarrow
\sum_i 2^i\cdot 2^{-(\log 2^i + g(\log 2^i))}<\infty
\Rightarrow
\sum_i 2^{-(\log i + g(\log i))}<\infty
\end{equation}

so by Theorem \ref{FmKgYATdsh} the set $A$ is computable from $\Omega$ with
use bounded by $\log n + g(\log n)$.
By composing the two reductions we conclude that $\alpha$
is computable from $\Omega$ with use bounded by
$\log 2^n + g(\log 2^n)$ \ie
 $n+g(n)$.

We can use a more direct argument to prove the same fact, without the
hypothesis that $h(n)-n$ is non-decreasing.
Let $g,\Omega, \alpha$ be as above and let
$(\alpha_s), (\Omega_s)$ be computable nondecreasing 
dyadic rational approximations that converge to $\alpha,\Omega$ respectively.
We construct a Solovay test along with a c.e. set $I$,  as follows. 
At each stage $s+1$ we consider the least
$n$ such that $\alpha_s(n)\neq \alpha_{s+1}(n)$, if such exists. If there exists such an $n$ we define
$\sigma_{s}=\Omega_{s+1}\restr_{n+g(n)}$ and enumerate $s$ into $I$.
We verify that the set of strings $\sigma_s, s\in I$ is a Solovay test. Note that for every $n$,
the number of stages $s$ such that $n$ is the least number with the property that
 $\alpha_s(n)\neq \alpha_{s+1}(n)$ is bounded above by 
 the number of times that $\alpha_s(n)$ can change from
0 to 1 in this monotone approximation to $\alpha$. Hence this number is bounded above
by $2^{n}$. So:
\[
\sum_{s\in I} 2^{-|\sigma_s|}\leq 
\sum_n 2^n\cdot 2^{-g(n)-n}=\sum_n 2^{-g(n)}<\infty.
\]
Since $\Omega$ is \ml random, there exists some $s_0$ such that none of the strings $\sigma_s$
for $s\in I$ and $s>s_0$ are prefixes of $\Omega$. This means that whenever our construction enumerates
$s$ in $I$ because we find $n$ with $\alpha_s(n)\neq \alpha_{s+1}(n)$, 
there exists some later stage where the approximation to $\Omega\restr_{n+g(n)}$ changes.
Hence with oracle $\Omega\restr_{s+g(s)}$ we can uniformly compute $\alpha(n)$, so
$\alpha$ is computable from $\Omega$ with oracle use $h$.

\subsection{Proof of Theorem \ref{yJAbgkLtG2}, clause (2)}\label{XLhNRMuqDV}
Recall the following theorem.
\begin{repthm}{yJAbgkLtG2}[Clause (2)]
Let $h$ be computable and such that $h(n)-n$ is non-decreasing.
If $\sum_n 2^{n-h(n)}$ diverges  
then no $\Omega$-number can compute all c.e.\  reals
with use $h+\bigo{1}$. In fact, in this case there exist two c.e.\  reals such 
that no c.e.\  real can compute both of them with use $h+\bigo{1}$.
\end{repthm}

Given a computable function $h$ such that  $h(n)-n$ is non-decreasing and 
$\sum_i 2^{n-h(n)}$ diverges, we need to construct two c.e.\  reals such 
that no c.e.\ real can compute both of them with use $h+\bigo{1}$.
Our presentation will be neater, however,  if we consider the following elementary fact,
which allows one to ignore the constants in the previous statement.

\begin{lem}[Space lemma]\label{le:sdreTbL9} 
If $g$ is computable, non-decreasing and $\sum_n 2^{-g(n)}=\infty$
then there exists a computable non-decreasing function $f$ such that 
$\lim_n (f(n)-g(n))=\infty$ and 
$\sum_n 2^{-f(n)}=\infty$.
\end{lem}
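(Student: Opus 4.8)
The plan is to build $f$ by slowing the growth of $g$ just enough to open up the required gap $f(n)-g(n)\to\infty$ while keeping the series $\sum_n 2^{-f(n)}$ divergent. The key observation is that since $\sum_n 2^{-g(n)}=\infty$, we have a lot of room to spare: we can afford to decrease $g$ (thereby increasing the summands $2^{-g(n)}$) and still keep divergence, and decreasing $g$ is precisely what creates the gap, because we want $f\leq g$ pointwise so that $f(n)-g(n)$ becomes large and negative\ldots{} wait, that is the wrong direction. Let me reconsider: to make $2^{-f(n)}$ large we want $f$ \emph{small}, but then $f(n)-g(n)\to\infty$ forces $f$ to eventually exceed $g$. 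So the two desiderata pull in opposite directions, and the real content of the lemma is that divergence of $\sum 2^{-g(n)}$ is a strong enough hypothesis to reconcile them.

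\textbf{Approach.} I would exploit the divergence by partitioning $\Nat$ into consecutive blocks $B_0, B_1, B_2, \dots$ chosen so that the partial sum of $2^{-g(n)}$ over each block $B_k$ is at least, say, $1$ (such blocks exist and can be found computably, since $g$ is computable and the tail sums diverge). On block $B_k$, I then define $f$ to be $g$ shifted upward by $k$, i.e.\ $f(n)=g(n)+k$ for $n\in B_k$. This immediately gives $f(n)-g(n)=k\to\infty$, securing the gap. For the divergence of $\sum_n 2^{-f(n)}$, note that the contribution of block $B_k$ is
\[
\sum_{n\in B_k} 2^{-f(n)} = 2^{-k}\sum_{n\in B_k} 2^{-g(n)} \geq 2^{-k}\cdot 1 = 2^{-k},
\]
which is summable — so this naive shift kills divergence. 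The fix is to demand more from each block: choose $B_k$ so that $\sum_{n\in B_k}2^{-g(n)}\geq 2^{k}$ (again possible by divergence, splitting $\Nat$ into blocks with arbitrarily large partial sums). Then block $B_k$ contributes at least $2^{-k}\cdot 2^{k}=1$, and summing over all $k$ yields $\infty$, as required.

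\textbf{Monotonicity.} The one subtlety is ensuring $f$ is non-decreasing, since $g$ is non-decreasing but I am adding a step function $n\mapsto k$ that jumps up at each block boundary; adding a non-decreasing step to a non-decreasing function preserves monotonicity, so within and across blocks $f$ is non-decreasing provided the block-constants $k$ are non-decreasing in $k$, which they are. I would also confirm $f$ is computable: the block decomposition is computable because the hypothesis lets us detect, by direct summation of the computable sequence $2^{-g(n)}$, the least endpoint at which the running partial sum for the current block reaches the threshold $2^{k}$.

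\textbf{Main obstacle.} The step that requires the most care is the simultaneous reconciliation of the three requirements: the gap demands $f$ grow faster than $g$, divergence demands $f$ not grow too fast, and these are balanced only because divergence of $\sum 2^{-g(n)}$ gives us blocks of unboundedly large mass to absorb the penalty $2^{-k}$ incurred by the shift. Getting the block thresholds right — large enough ($\geq 2^k$) that the shifted contribution stays bounded below by a constant, yet finitely attainable so that $f$ is total and computable — is the crux, and once the threshold $2^{k}$ is chosen correctly the rest is the routine verification sketched above.
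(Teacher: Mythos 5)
Your proof is correct and is essentially the paper's own argument: partition $\Nat$ into computably determined blocks over which the mass $\sum 2^{-g(n)}$ exceeds $2^k$, and set $f=g+k$ on the $k$-th block, so the shifted contribution per block stays bounded below by a constant while $f-g\to\infty$. The monotonicity and computability checks you add are exactly the routine points the paper leaves implicit.
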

\begin{proof}
Consider a computable increasing sequence $(n_i)$ such that $n_0=0$ and
\[
\sum_{n\in I_k} 2^{-g(n)}>2^k
\hspace{0.5cm}
\textrm{for all $k$}
\]
where $I_k=[n_k,n_{k+1})$.
Then for each $k$ and each $i\in [n_k, n_{k+1})$
define $f(i)=g(i)+k$. Then
\[
\sum_n 2^{-f(n)}\geq \sum_k \left(\sum_{n\in I_k}2^{-f(n)}\right)
=
\sum_k \left(2^{-k}\cdot \sum_{n\in I_k}2^{-g(n)}\right)
\geq 
\sum_k \left(2^{-k}\cdot 2^{k}\right)=\infty
\]
which concludes the proof.
\end{proof}

By Lemma \ref{le:sdreTbL9}, for the proof of the second clause of Theorem 
\ref{yJAbgkLtG2} it suffices to 
show that, given a computable non-decreasing function $h$ with  
$\sum_i 2^{n-h(n)}=\infty$, there exist two c.e.\  reals such 
that no c.e.\  real can compute both of them with use $h$.
We need to construct two \ce reals $\alpha,\beta$ such the requirement
\begin{equation}\label{43sbLID3NW}
\mathcal{R}(\Phi,\Psi,\gamma):\ \alpha\neq \Phi^{\gamma}
\hspace{0.3cm}\vee\hspace{0.3cm}
\beta\neq \Psi^{\gamma}
\end{equation}
is met for every triple $(\Phi,\Psi,\gamma)$ such that  $\Phi,\Psi$ are Turing functionals with
use $h$ and $\gamma$ is a c.e.\ real.
For the simple case where $h(n)=n+\bigo{1}$,
this was done by Yu and Ding in \cite{DingY04}, and a simplification of their argument
appeared in \cite{BL05.5}. 
The underlying method for this type of argument involves an amplification game, which we
discuss in Section \ref{jfLlVatCrP}. Then in Section \ref{QgTGsPtrZf} we build on these
ideas in order to produce a more sophisticated argument which deals with an arbitrary choice of
$h$ satisfying the hypothesis of the theorem.

\subsubsection{Amplification games}\label{jfLlVatCrP}
The reals $\gamma$ in requirements
\eqref{43sbLID3NW} will be given with a non-decreasing computable
rational approximation $(\gamma_s)$. Our task is to define suitable
computable approximations $(\alpha_s)$, $(\beta_s)$ to $\alpha,\beta$ respectively,
so that \eqref{43sbLID3NW} is met. The idea is that if (the approximation to)
$\alpha\restr_n$  changes at a stage where $\Phi^{\gamma}$
is defined to be an extension of (the previous version of) $\alpha\restr_n$, then either $\alpha\neq \Phi^{\gamma}$ or
(the approximation to) $\gamma\restr_{h(n)}$ needs to change at a later stage
(and similarly for $\beta$).
This gives us a way to drive  $\gamma$
to larger and larger values, if $\Phi,\Psi$ insist on mapping the 
current approximations of $\gamma$ to
the current approximations of $\alpha,\beta$ respectively.

In order to elaborate on this approach,
consider the following game between two players, which monotonically approximate two
\ce reals $\alpha$, $ \gamma$ respectively. 
Each of these numbers
has some initial value, and during the stages of the game values can only 
increase. If
$\alpha$ increases and $i$ is the leftmost position where an $\alpha$-digit 
change occurs, then $\gamma$ has to increase in such a way that some 
$\gamma$-digit at
a position $\leq h(i)$ changes. This game describes a Turing computation of $\alpha$ from
$\gamma$ with use $h$. If
$\gamma$ has to code two reals $\alpha,\beta$ then we get a similar game 
(where, say, at each stage only one of $\alpha,\beta$ can change). 
In order to block one of the reductions in \eqref{43sbLID3NW} the idea is to
devise a strategy which forces $\gamma$ to either stop
computing $\alpha,\beta$ in this way, or else grow to exceed the interval $(0,1)$
to which it is assumed to belong.

It turns out that in such games, there is a best strategy for $\gamma$, in the sense
that it causes the least possible increases in $\gamma$ while responding to the requests of the opponent(s).
We say that $\gamma$ follows the {\em least effort
strategy} if at each stage it increases by the least amount needed 
in order to satisfy the requirements of
the game.

\begin{lem}[Least effort strategy]\label{le:beststrat}
In a game where $\gamma$ has to follow instructions of the type `change a 
digit at position $\leq n$',  the least effort strategy is a best strategy for $\gamma$. 
In other words,  if a different strategy produces $\gamma'$
then at each stage $s$ of the game $\gamma_s\leq\gamma_s'$.
\end{lem}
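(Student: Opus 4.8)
The plan is to prove the statement by induction on the stage $s$, showing that $\gamma_s \leq \gamma_s'$ holds throughout, where $\gamma$ (the least effort strategy) and $\gamma'$ (an arbitrary strategy) respond to the same sequence of instructions from the same initial value. The crux is to first pin down precisely what the least effort response to a single instruction is, and to show that \emph{any} legal response is at least as large.

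First I would establish the key local fact. Recall that position $k$ carries weight $2^{-k}$, and that an instruction ``change a digit at position $\leq n$'' requires the new value $y$ to differ from the old value $x$ in some digit at a position $\leq n$, subject to $y > x$ (the approximations are monotone). I claim that a monotone step $x \to y$ changes a digit at some position $\leq n$ if and only if $x$ and $y$ lie in distinct dyadic blocks of the form $[j\,2^{-n}, (j+1)\,2^{-n})$; equivalently, $y \geq m_n(x)$, where $m_n(x) := (\lfloor x\,2^n \rfloor + 1)\,2^{-n}$ is the least multiple of $2^{-n}$ strictly greater than $x$. Indeed, if $y < m_n(x)$ then $\lfloor x\,2^n\rfloor = \lfloor y\,2^n\rfloor$, so the first $n$ binary digits of $x$ and $y$ agree; conversely $m_n(x)$ lies in the next block and hence does change such a digit. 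Thus the least effort response is exactly $\gamma_{s+1} = m_{n}(\gamma_s)$, while any legal response satisfies $\gamma_{s+1}' \geq m_n(\gamma_s')$.

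Second, I would record that for each fixed $n$ the map $m_n$ is non-decreasing, since $x \mapsto \lfloor x\,2^n \rfloor$ is. The induction then runs smoothly. At the base stage the two reals agree by hypothesis. For the inductive step, suppose $\gamma_s \leq \gamma_s'$. If the stage carries no instruction, then $\gamma_{s+1} = \gamma_s \leq \gamma_s' \leq \gamma_{s+1}'$ by monotonicity of $\gamma'$; if it carries the instruction with bound $n$, then
\[
\gamma_{s+1} = m_n(\gamma_s) \leq m_n(\gamma_s') \leq \gamma_{s+1}',
\]
using monotonicity of $m_n$ together with the inductive hypothesis and the lower bound on any legal response. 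This closes the induction and yields $\gamma_s \leq \gamma_s'$ at every stage.

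I expect the only genuinely delicate point to be the characterisation of legal responses in the first step, and in particular the observation that ``changing a digit at a position $\leq n$'' while increasing forces the value all the way up to the next multiple of $2^{-n}$, rather than merely up by $2^{-n}$. Once this is phrased through the floor function $m_n$ and its monotonicity, the rest is a routine two-line induction; the dyadic ambiguity of values that are exact multiples of $2^{-n}$ causes no trouble, as already flagged in the preliminaries.
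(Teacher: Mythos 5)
Your proof is correct, and its overall skeleton is the same as the paper's: induction on stages, showing that the least-effort player is never overtaken by any other legal strategy. The difference lies in how the inductive step is discharged. The paper argues combinatorially on binary expansions: assuming $\gamma_s<\gamma_s'$, it locates the leftmost position where $\gamma_s$ has a $0$ and $\gamma_s'$ has a $1$ (with identical prefixes before that position), and then splits into two cases according to whether the instruction's bound falls before or after this position, settling each case with a rather terse ``it is clear''. You instead isolate a single order-theoretic fact: a monotone move from $x$ obeys the instruction with bound $n$ if and only if the new value is at least $m_n(x)=(\lfloor x\,2^n\rfloor+1)\,2^{-n}$, so the least-effort move is exactly $m_n(\gamma_s)$, any legal move from $\gamma_s'$ is at least $m_n(\gamma_s')$, and the step reduces to the monotonicity of $m_n$, with no case analysis at all. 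Your route buys three things: the paper's two cases (and the tacit case of stages where $\gamma'$ increases without being instructed to) collapse into the one-line inequality $m_n(\gamma_s)\le m_n(\gamma_s')\le\gamma_{s+1}'$; the carrying phenomenon --- that ``change a digit at position $\le n$'' while increasing forces the value up to the \emph{next multiple} of $2^{-n}$, not merely up by $2^{-n}$ --- is made explicit rather than left implicit in the phrase ``least amount'', and this is exactly the mechanism the paper relies on later (e.g.\ in the atomic attack of Lemma \ref{prop:main}); and the dyadic-expansion ambiguity is dealt with once, inside the characterisation of legal moves. What the paper's version buys in exchange is proximity to the informal digit-game picture used throughout Section \ref{jfLlVatCrP}, at the cost of leaving the verification of its two cases to the reader. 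Both arguments are sound; yours is the more mechanically checkable rendering of the same induction.
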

\begin{proof}
We use induction on the stages $s$. We have that $\gamma_0\leq\gamma_0'$. If \( 
\gamma_s =\gamma_s' \) then it is clear from the definition of the least 
effort strategy that the induction hypothesis
will hold at stage \( s+1 \). So suppose otherwise. Then
$\gamma_s < \gamma_s'$  so that there will
be a position $n$ such that $0=\gamma_s(n)< \gamma_s'(n)=1$ and 
$\gamma_s\restr
n=\gamma_s'\restr n$. Suppose that $\gamma, \gamma'$
are forced to change at a  position $\leq t$  at stage $s+1$. If $t<n$ it is 
clear
that $\gamma_{s+1}\leq\gamma_{s+1}'$. Otherwise the leftmost change $\gamma$ 
can
be forced to
make is at position $n$. Once again $\gamma_{s+1}\leq\gamma_{s+1}'$.
\end{proof}

Lemma \ref{le:beststrat} allows us to assume
a fixed strategy for the opponent approximating $\gamma$, which reduces our analysis to the assessment of
a deterministic process, once we specify our strategy for the requests
that the approximations to $\alpha,\beta$ generate.
The following observation is also useful in our analysis.
\begin{lem}{\em (Accumulation)}\label{le:pass}
Suppose that in some game (e.g.\ like the above) $\gamma$ has to follow
instructions of the type `change a digit at position $\leq n$'. Although
$\gamma_0=0$, some $\gamma'$ plays the same game while starting with
$\gamma_0'=\sigma$ for a finite
binary expansion $\sigma$. If
$\gamma$ and $ \gamma'$ both use the least effort strategy 
and the sequence of instructions only ever demands
change at positions $>|\sigma|$ then 
$\gamma'_s=\gamma_s+\sigma$ at every stage $s$.
\end{lem}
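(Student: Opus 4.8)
The plan is to argue by induction on the stage $s$, exactly as in the proof of Lemma \ref{le:beststrat}, maintaining the invariant $\gamma_s' = \gamma_s + \sigma$. The base case is immediate, since $\gamma_0 = 0$ and $\gamma_0' = \sigma = \gamma_0 + \sigma$. For the inductive step the key is to pin down precisely what the least effort strategy does in response to a single instruction, and then to check that this move commutes with adding the fixed offset $\sigma$.

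First I would record an explicit description of a least-effort move. Suppose the current value is $\gamma_s$ and the instruction is `change a digit at position $\leq n$'. Every dyadic rational strictly between $\gamma_s$ and $(\lfloor \gamma_s 2^n\rfloor + 1)2^{-n}$ lies in the same half-open block $[\lfloor \gamma_s 2^n\rfloor 2^{-n},\ (\lfloor \gamma_s 2^n\rfloor+1)2^{-n})$ and hence agrees with $\gamma_s$ on all positions $\leq n$; on the other hand the value $(\lfloor \gamma_s 2^n\rfloor+1)2^{-n}$ does differ from $\gamma_s$ at some position $\leq n$, since its first $n$ positions encode $\lfloor \gamma_s 2^n\rfloor + 1 \neq \lfloor \gamma_s 2^n\rfloor$. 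Thus the least effort strategy responds by setting $\gamma_{s+1} = (\lfloor \gamma_s 2^n\rfloor + 1)2^{-n}$, i.e.\ it rounds $\gamma_s$ up to the next multiple of $2^{-n}$. (If no instruction is issued at a stage, neither real moves and the invariant is trivially preserved.)

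With this in hand the inductive step reduces to an elementary identity. By hypothesis every instruction demands a change at a position $n > |\sigma|$, so $\sigma$ is a multiple of $2^{-n}$ and $k := \sigma \cdot 2^n \in \Nat$. Applying the least-effort description to both players with the common bound $n$, and using $\gamma_s' = \gamma_s + \sigma$ from the induction hypothesis, we compute
\[
\gamma_{s+1}' = (\lfloor (\gamma_s + \sigma) 2^n\rfloor + 1)2^{-n} = (\lfloor \gamma_s 2^n\rfloor + k + 1)2^{-n} = (\lfloor \gamma_s 2^n\rfloor + 1)2^{-n} + \sigma = \gamma_{s+1} + \sigma,
\]
where the second equality uses $\lfloor x + k\rfloor = \lfloor x\rfloor + k$ for integer $k$. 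This closes the induction.

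I expect the only genuine subtlety to be the first step: giving a clean and correct characterization of the least-effort move, in particular handling the case where rounding up to the next multiple of $2^{-n}$ triggers a carry that flips several positions $\leq n$ at once. This is harmless for the argument, since all that matters is that \emph{at least one} position $\leq n$ changes and that the resulting value is the least dyadic achieving this. Once the move is identified with `round up to the next multiple of $2^{-n}$', the commutation with the offset $\sigma$ is the one-line integer identity above, and the assumption $n > |\sigma|$ is exactly what guarantees $\sigma \cdot 2^n \in \Nat$.
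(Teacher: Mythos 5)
Your proof is correct and follows essentially the same route as the paper: induction on stages, with the inductive step resting on the observation that adding the offset $\sigma$ (supported on positions $\leq|\sigma|$) leaves the digits beyond position $|\sigma|$ untouched, so both players' least-effort increments coincide. Your explicit identification of the least-effort move as rounding up to the next multiple of $2^{-n}$, together with the identity $\lfloor x+k\rfloor=\lfloor x\rfloor+k$, is just a more computational rendering of the paper's statement that $\gamma_s$ and $\gamma'_s$ ``have the same expansions after position $|\sigma|$'' and hence require the same increase.
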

\begin{proof}
By induction on $s$. For $s=0$ the result is obvious. Suppose that the 
induction
hypothesis holds at stage $s$. Then  $\gamma'_s,
\gamma_s$ have the same expansions
after position $|\sigma|$.
At $s+1$, some demand for a change at some position $>|\sigma|$ appears and
since
$\gamma, \gamma'$ look the same on these
positions, $\gamma'_s$ will need to increase by the same amount that
$\gamma_s$ needs to increase. So
$\gamma'_{s+1}=\gamma_{s+1}+\sigma$ as required.
\end{proof}

We are now ready to describe the strategy for the approximation of $\alpha,\beta$
restricted to an interval $(t-n,t]$. This strategy automatically induces a deterministic
response from $\gamma$ according to the least effort strategy.

\begin{defi}\label{EavAdjOPTD}
The $h$-load process in $(t-n,t]$ with $(\alpha,\beta,\gamma)$ is the process which starts with
$\alpha_0=\beta_0=\gamma_0=0$, and at each stage $2^{-t}$ is added alternately in
$\alpha$ and $\beta$. Moreover at each stage $s+1$, 
if $k$ is the least number such that $\alpha_{s+1}\restr_k\neq\alpha_{s}\restr_k$ or 
$\beta_{s+1}\restr_k\neq\beta_{s}\restr_k$, 
we add to $\gamma$ the least amount which can change $\gamma\restr_{h(k)}$.
\end{defi}

In more detail,  the $h$-load process in $(t-n,t]$ with $(\alpha,\beta,\gamma)$ is 
defined by the following instructions.
Assume that $\alpha, \beta, \gamma$ have initial value $0$.
Repeat the following instructions until $\alpha(i)=\beta(i)=1$ for all
$i\in(t-n,t]$.

	\begin{enumerate}[\hspace{0.5cm}(1)]
	\item If $s$ odd then, let $\alpha = \alpha + 2^{-t}$ and let $k$ equal the
	 leftmost position where a change occurs in $\alpha$. Also
	add to $\gamma$ the least amount which causes a change in $\gamma\restr_{h(k)}$.
	\item If $s$ even, let $\beta = \beta + 2^{-t}$ and let $k$ equal the
	 leftmost position where a change occurs in $\beta$.
	Also	add to $\gamma$ the least amount which causes a change in $\gamma\restr_{h(k)}$.
	\end{enumerate}

It is not hard to see that the above procedure describes how $\gamma$ 
evolves
when it tries to code $\alpha, \beta$ via Turing reductions with 
use $h$ and it uses the {\em least effort strategy}. Player $\gamma$ follows the 
{\em least effort strategy} when it increases by the least amount which can 
rectify the functionals holding its computations of $\alpha$ and $\beta$.
Note that in a realistic construction, the steps of the above strategy are supposed to happen
at stages where $\gamma$ currently computes the first $t+1$ bits of both $\alpha,\beta$ 
via $\Phi,\Psi$ respectively.

The following lemma says that in the special case where $h(n)=n+\bigo{1}$, the
$h$-load process is successful at forcing $\gamma$ to grow significantly.

\begin{lem}[Atomic attack]\label{prop:main}
Let $n>0$ and $h(x)=x+c$ for some constant $c$. 
For any $k>0$ the
$h$-load process in $(k,k+n]$ with $(\alpha,\beta,\gamma)$ 
 ends with $\gamma = n \cdot 2^{-k-c}$.
\end{lem}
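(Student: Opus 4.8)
The plan is to compute the final value of $\gamma$ directly, by tracking how much $\gamma$ grows in response to each increment of $\alpha$ and $\beta$. First I would record the combinatorial skeleton of the process. Since each increment adds $2^{-(k+n)}$ and the process halts once $\alpha$ and $\beta$ both carry $1$'s at every position in $(k,k+n]$, each of $\alpha,\beta$ receives exactly $2^n-1$ increments, with $\alpha$ acting on the odd stages and $\beta$ on the even stages. On the $v$-th increment (taking the relevant real from $(v-1)2^{-(k+n)}$ to $v\cdot 2^{-(k+n)}$), the leftmost position where a digit changes is the position of the lowest set bit of $v$; writing $r(v)$ for its index (counted from the least significant position $k+n$), this leftmost position is $k+n-r(v)$, so $\gamma$ is instructed to change a digit at a position $\leq h(k+n-r(v))=k+n-r(v)+c$.

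The crux is the following claim: \emph{each such instruction increases $\gamma$ by exactly $2^{-(k+n-r(v)+c)}$}. Granting this, the result is a routine summation. The number of $v\in\{1,\dots,2^n-1\}$ with $r(v)=j$ is $2^{n-1-j}$ for $j=0,\dots,n-1$, and since $\alpha$ and $\beta$ contribute identically, the total growth of $\gamma$ is
\[
2\sum_{v=1}^{2^n-1} 2^{-(k+n-r(v)+c)}
= 2\cdot 2^{-(k+n+c)}\sum_{j=0}^{n-1} 2^{n-1-j}\cdot 2^{j}
= 2\cdot 2^{-(k+n+c)}\cdot n\,2^{n-1}
= n\cdot 2^{-(k+c)},
\]
which is the asserted value.

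To prove the claim I would use the \emph{least effort} strategy of Lemma~\ref{le:beststrat} together with the fact that the instructions arrive in consecutive \emph{equal pairs}: the $v$-th increments of $\alpha$ and $\beta$ occur at the adjacent stages $2v-1$ and $2v$ and demand a change at the same position. Writing $m_1=k+n+c$ for the finest scale and $G=\gamma\cdot 2^{m_1}\in\Nat$, changing $\gamma\restr_{m_1-r}$ with least effort increments $\lfloor G/2^{r}\rfloor$ by one, i.e.\ raises $G$ to the next multiple of $2^{r}$; this raises $\gamma$ by exactly $2^{-(m_1-r)}$ precisely when $G$ is already a multiple of $2^{r}$ at that moment. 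So it suffices to show that immediately before each pair of instructions of index $r$, the integer $G$ is divisible by $2^{r}$. Processing a pair of index $r$ starting from such a $G$ adds $2^{r}$ twice, i.e.\ adds $2^{r+1}$ while keeping $G$ divisible by $2^{r}$; hence before the $i$-th pair $G$ equals $2\sum_{j<i}2^{r(j)}=2\sum_{j<i}\mathrm{lowbit}(j)$, and the required divisibility reduces to the number-theoretic fact that the partial sums $S(N)=\sum_{j=1}^{N}\mathrm{lowbit}(j)$ satisfy $S(i-1)\equiv 0 \pmod{2^{\,r(i)-1}}$. This last point I would verify by induction using the recursion $S(2m)=2S(m)+m$ (odd indices being trivial).

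The main obstacle is exactly this invariant — that $\gamma$ is a multiple of $2^{-h(p)}$ whenever a change at position $p$ is requested, so that no hidden savings occur through carries out of already-set finer digits. Everything hinges on the observation that the one-by-one interleaving of the two identical $\alpha$- and $\beta$-processes delivers the requests in equal consecutive pairs, which is precisely what clears each finer digit before a coarser request can exploit it. Once this pairing invariant is established, both the per-step equality and the final summation are mechanical.
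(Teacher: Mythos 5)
Your proof is correct, but it takes a genuinely different route from the paper's. The paper argues by induction on the length $n$ of the attack interval: the $h$-load process on $(k-1,k+n]$ is decomposed as the process on $(k,k+n]$ (costing $n\cdot 2^{-k-c}$ by the induction hypothesis), followed by the two boundary steps at position $k$ (each costing exactly $2^{-k-c}$, because at that moment $\gamma$ has no $1$s to the right of position $k+c$), followed by a repetition of the process on $(k,k+n]$, which by the accumulation lemma (Lemma \ref{le:pass}) costs exactly $n\cdot 2^{-k-c}$ again; this telescopes to $(n+1)\cdot 2^{-(k-1)-c}$. You instead unroll the whole process non-recursively: you identify the ruler sequence of leftmost-change positions via the $2$-adic valuation $r(v)$, prove the per-step exactness claim that every request at position $p$ costs the full $2^{-h(p)}$, and then sum over the $2^{n-1-j}$ requests of each index $j$. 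The crux in your version is the divisibility invariant $S(i-1)\equiv 0\pmod{2^{\,r(i)-1}}$ for $S(N)=\sum_{j\leq N}\mathrm{lowbit}(j)$, which you correctly reduce to the recursion $S(2m)=2S(m)+m$ (and indeed $S(i-1)=2S(i/2-1)+i/2$ for even $i$ closes the induction); this is exactly the role played in the paper by the ``no $1$s beyond position $k+c$'' observation together with Lemma \ref{le:pass}, both encoding the fact that the paired, interleaved $\alpha$- and $\beta$-requests clear the finer digits before a coarser request could exploit a carry. What the paper's argument buys is brevity and reuse of already-proved machinery (Lemmas \ref{le:beststrat} and \ref{le:pass}) without any explicit bookkeeping; what yours buys is strictly finer information, namely that \emph{each individual} instruction is fully paid, so the final value is obtained as an exact sum rather than emerging from a self-similarity argument --- a refinement that also makes transparent why the ``false amplification lower bound'' discussed before Definition \ref{sig} can fail for general $h$, since it is precisely the divisibility invariant that is special to the constant-offset case $h(x)=x+c$.
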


\begin{proof}
By induction: for $n=1$ the result is obvious. Assume that the result holds 
for $n$. Now pick
$k>0$ and consider the attack using $(k-1,k+n]$. It is clear 
that up
to a stage $s_0$ this will be identical to the procedure with attack 
interval
$(k,k+n]$. By the induction hypothesis $\gamma_{s_0} = n 2^{-k-c}$
and $\alpha(i)=\beta(i)=1$ for all $i\in (k,k+n]$, while 
$\alpha(k)=\beta(k)=0$.
According to the next step $\alpha$ changes at position $k$ and this forces
$\gamma$ to increase by $2^{-k-c}$
since $\gamma$ has no $1$s to the right of position $k+c$. Then $\beta$ does 
the same  and since $\gamma$ still has no $1$s to the right of position $k+c$,
$\gamma$ has to increase by $ 2^{-k-c}$ once again. So far
\[
\gamma = n 2^{-k-c} + 2^{-k-c} + 2^{-k-c} = n 2^{-k-c} + 2^{-(k-1)-c}
\]
and $\alpha(i)=\beta(i)=0$ for all $i\in (k,k+n]$ while 
$\alpha(k)=\beta(k)=1$.
By applying the induction hypothesis again together with Lemma
\ref{le:pass}, the further increase of $\gamma$ will be exactly $n 2^{-k-c}$. 
So
\[
\gamma = n 2^{-k-c} + 2^{-(k-1)-c} + n 2^{-k-c} = (n+1)\cdot 2^{-(k-1)-c}
\]
as required.
\end{proof}
The analysis we just presented is sufficient for a construction of $\alpha,\beta$
which proves the second clause of
Theorem \ref{yJAbgkLtG2} for the specific case that $h(n)=n+\bigo{1}$.
One only has to assign attack intervals to different versions of requirement
\eqref{43sbLID3NW} and implement the $h$-load process in a priority fashion, 
gradually satisfying all conditions.
In the next section we build on these ideas in order to deal with the general case for $h$,
and in Section \ref{llohQmoEdz} we give the formal construction of $\alpha,\beta$.

\subsubsection{Generalized amplification games}\label{QgTGsPtrZf}
We wish to obtain a version of Lemma \ref{prop:main} which does not depend on a fixed choice of
$h$. For ease of notation, let $g(n)=h(n)-n$ and assume that $g$ is non-decreasing.

It is tempting to think that perhaps, in this general case, the $h$-load process
in $(k,k+n]$ with $(\alpha,\beta,\gamma)$  ends up with
\begin{equation}\label{BRHqKK3T8u}
\gamma=2^{-k}\cdot \sum_{i\in(k,k+n]} 2^{-g(i)}
\hspace{0.8cm}
\textrm{(false amplification lower bound).}
\end{equation}

With a few simple examples, the reader can be convinced that this is not the case.
Luckily, the $h$-load process does give a usable amplification effect, but in order to
obtain the factor in this amplification we need to calibrate the divergence of the series
$\sum_i 2^{-g(i)}$, by viewing $g$ as a step-function.

\begin{defi}[Signature of a non-decreasing function] \label{sig}
Let $g:\Nat\to\Nat$ be a non-decreasing function.
The {\em signature of $g$} is
a (finite or infinite) sequence $(c_j, I_j)$ of pairs of numbers $c_j$ and intervals $I_j$ (for $j\geq 0$),
such that $(I_j)$ is a partition of $\Nat$, 
$g(x)=c_j$ for
all $x\in I_j$ and all $j$, and $c_j> c_{j'}$ for $j> j'$.
The length of the sequence $(c_j, I_j)$ is called the length of the signature of $g$.
\end{defi}

Note that if $(c_j, I_j)$ is the signature of $g$ then
$\sum_i 2^{-g(i)}=\sum_j |I_j|\cdot 2^{-c_j}$, where in the latter sum the indices run over the length of
the sequence $(c_j, I_j)$. More generally, if $J$ is an interval of numbers which are less than the length
of the signature of $g$ we have
$\sum_{i\in I} 2^{-g(i)}=\sum_{j\in J} |I_j|\cdot 2^{-c_j}$ where $I=\cup_{j\in J} I_j$.
We define a truncation operation that applies to such partial sums, in order to replace the false
amplification lower bound \ref{BRHqKK3T8u} with a correct lower bound.

\begin{defi}[Truncation]
Given a real number $x\in (0,1)$ and an increasing sequence $(c_j)$, 
for each $t$ let
\begin{equation*}
T(x,c_t)=\sum_{i\leq t} n_i\cdot 2^{-c_i}
\hspace{0.5cm}
\textrm{where}
\hspace{0.5cm}
x= n_0\cdot 2^{-c_0}+ n_1\cdot 2^{-c_1}+\cdots=\sum_i n_i\cdot 2^{-c_i}
\end{equation*}
is the unique representation of $x$ as a sum of multiples of $2^{-c_j}$
such that $n_{i+1}\cdot 2^{-c_{i+1}}<2^{-c_i}$ for all $i$.
\end{defi}

 We now define a sequence of truncated sums that can be used in replacing the false 
 lower bound \eqref{BRHqKK3T8u}
 with a valid one. These quantities depend on the sequence $(c_j, I_j)$, which in turn depends on
 the function $g$.

\begin{defi}[Truncated sums]
Given the finite or infinite sequence $(c_j, I_j)$ and $k$ such that $c_j$ and $I_j$ are defined, the sequence of truncated sums with respect to $k$ is defined
inductively as follows ($i<k$):
\begin{eqnarray*}
S_k(0)&=&T(|I_k|\cdot 2^{-c_k}, c_{k-1})\\
S_k(i)&=&T(|I_{k-i}|\cdot 2^{-c_{k-i}}+S_{k}(i-1), c_{k-i-1}) \ \ \ \ \ \ \ \ \ \mbox{for }i>0. 
\end{eqnarray*}
\end{defi}

Before we show that the $h$-load strategy with $(\alpha,\beta,\gamma)$ forces $\gamma$ to
grow to a suitable truncated sum, we show that the truncated sums grow appropriately, assuming that
$\sum_i 2^{-g(i)}$ is unbounded.
Note that for increasing $i$, the sum $S_k(i)$ corresponds to later stages of the $h$-load
strategy, and intervals $I_{k-i}$ which are closer to the decimal point. This explains the
decreasing indices in the above definition and the following lemma.

\begin{lem}[Lower bounds on truncated sums]\label{UxBg9AKlSc}
For each $t<k$ we have
$S_k(t)\geq \sum_{i\leq t} |I_{k-i}|\cdot 2^{-c_{k-i}} -1$.
\end{lem}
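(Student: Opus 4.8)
The plan is to recast the truncation operator geometrically and then bound the accumulated truncation loss by a telescoping estimate. The first thing I would record is that $T(x,c_t)=2^{-c_t}\lfloor 2^{c_t}x\rfloor$, i.e.\ $T(x,c_t)$ is the largest integer multiple of $2^{-c_t}$ not exceeding $x$. This follows directly from the defining inequality $n_{i+1}\cdot 2^{-c_{i+1}}<2^{-c_i}$, which (after the bound $n_j 2^{-c_j}\le 2^{-c_{j-1}}-2^{-c_j}$ and telescoping the tail) forces the discarded remainder $\sum_{i>t}n_i 2^{-c_i}$ to lie in $[0,2^{-c_t})$. In particular every $S_k(i)$ is a multiple of $2^{-c_{k-i-1}}$, and each application of $T$ discards strictly less than one grid cell.

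Writing $a_i=|I_{k-i}|\cdot 2^{-c_{k-i}}$ and $W_t=\sum_{i\le t}a_i$, and setting $S_k(-1)=0$ to unify the base case, I would introduce the cumulative loss $L_t=W_t-S_k(t)\ge 0$ and unwind the recursion into $L_t=L_{t-1}+\varepsilon_t$, where $\varepsilon_t$ is exactly the remainder thrown away when $a_t+S_k(t-1)$ is floored to the grid $2^{-c_{k-t-1}}$. Thus $L_t=\sum_{i=0}^{t}\varepsilon_i$.

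The key step — and the step that actually yields the constant $1$ rather than a weaker bound — is to observe that the quantity being truncated at stage $t$ already lives on a \emph{finer} grid than the one used to truncate it. Indeed $S_k(t-1)=T(\,\cdot\,,c_{k-t})$ is a multiple of $2^{-c_{k-t}}$, and $a_t=|I_{k-t}|\cdot 2^{-c_{k-t}}$ is too, so $a_t+S_k(t-1)$ is a multiple of $2^{-c_{k-t}}$; flooring a multiple of $2^{-c_{k-t}}$ to the coarser grid $2^{-c_{k-t-1}}$ (recall $c_{k-t-1}<c_{k-t}$) leaves a remainder that is itself a multiple of $2^{-c_{k-t}}$ lying in $[0,2^{-c_{k-t-1}})$. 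Hence $\varepsilon_t\le 2^{-c_{k-t-1}}-2^{-c_{k-t}}$, the full grid gap rather than the full cell width. Summing these bounds telescopes cleanly:
\[
L_t\le\sum_{i=0}^{t}\bigl(2^{-c_{k-i-1}}-2^{-c_{k-i}}\bigr)=2^{-c_{k-t-1}}-2^{-c_k}<2^{-c_{k-t-1}}\le 1,
\]
where the last inequality uses $c_{k-t-1}\ge 0$ (valid since $t<k$). Therefore $S_k(t)=W_t-L_t>W_t-1$, which is the claimed inequality (in fact with strict inequality and the sharper loss $2^{-c_{k-t-1}}$).

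The main obstacle is precisely the refinement isolated above. The naive estimate $\varepsilon_t<2^{-c_{k-t-1}}$ only gives $L_t<\sum_{i}2^{-c_{k-i-1}}$, which can approach $2$ when the values $c_j$ run through $0,1,2,\dots$; this is not good enough for the constant $1$. The resolution is to exploit that the running sum $S_k(t-1)$ sits on the grid $2^{-c_{k-t}}$, so the discarded remainder is at most the gap $2^{-c_{k-t-1}}-2^{-c_{k-t}}$, which is exactly what makes the per-stage losses telescope to something below $1$ instead of merely being summed as independent cell widths.
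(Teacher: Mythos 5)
Your proof is correct and is essentially the paper's own argument in different packaging: the paper runs an induction on $t$ maintaining the invariant $\sum_{i\le t}|I_{k-i}|\cdot 2^{-c_{k-i}}\le S_k(t)+\sum_{i\in(c_{k-t-1},c_k]}2^{-i}$, whose correction term is exactly your telescoped loss $2^{-c_{k-t-1}}-2^{-c_k}$, and its inductive step rests on the same key observation you isolate, namely that the quantity being truncated lies on the finer grid of multiples of $2^{-c_{k-t}}$, so each truncation discards at most the grid gap $2^{-c_{k-t-1}}-2^{-c_{k-t}}$ rather than a full coarse cell. Your floor characterization $T(x,c_t)=2^{-c_t}\lfloor 2^{c_t}x\rfloor$ and the cumulative-loss bookkeeping $L_t=\sum_i\varepsilon_i$ are a transparent unwinding of that induction, so nothing is missing.
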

\begin{proof}
Since $\sum_{i\in (c_{0},c_k]} 2^{-i}<1$ it suffices to show that for each $t<k$ we have
\begin{equation}\label{mQAbEePJda}
\sum_{i\leq t} |I_{k-i}|\cdot 2^{-c_{k-i}}\leq
S_k(t) +\sum_{i\in (c_{k-t-1},c_{k}]} 2^{-i}.
\end{equation}
We use finite induction on $t<k$. For $t=0$,
we have $|I_k|\cdot 2^{-c_k}-S_k(0)\leq \sum_{i\in (c_{k-1},c_k]} 2^{-i}$. 
Now inductively assume that \eqref{mQAbEePJda} holds for some $t$ such that $t<k-1$.
Then using the induction hypothesis we have:
\[
\sum_{i\leq t+1} |I_{k-i}|\cdot 2^{-c_{k-i}}=
\sum_{i\leq t} |I_{k-i}|\cdot 2^{-c_{k-i}} +
|I_{k-t-1}|\cdot 2^{-c_{k-t-1}}\leq
S_k(t) +\sum_{i\in (c_{k-t-1},c_{k}]} 2^{-i}+
|I_{k-t-1}|\cdot 2^{-c_{k-t-1}}.
\]
But
$S_{k}(t)+|I_{k-t-1}|\cdot 2^{-c_{k-t-1}}$ is clearly bounded above by
$T(S_{k}(t)+|I_{k-t-1}|\cdot 2^{-c_{k-t-1}}, c_{k-t-2})+\sum_{i\in (c_{k-t-2},c_{k-t-1}]} 2^{-i}$. So:
\[
\sum_{i\leq t+1} |I_{k-i}|\cdot 2^{-c_{k-i}}\leq
T(S_{k}(t)+|I_{k-t-1}|\cdot 2^{-c_{k-t-1}}, c_{k-t-2})+
\sum_{i\in (c_{k-t-2},c_{k}]} 2^{-i}=
S_k(t+1)+\sum_{i\in (c_{k-t-2},c_{k}]} 2^{-i},
\]
which concludes the induction step and the proof.
\end{proof}
Now, in general, the signature of $g$ may be finite or infinite. The case in which this signature is finite, however, corresponds to the situation $h(n)=n+\bigo{1}$, which was previously dealt with by  Yu and Ding in \cite{DingY04}, as discussed earlier. For ease of notation, \emph{we therefore assume from now on that the signature of $g$ is infinite}. 

Suppose that we apply the $h$-load process in
$I=\bigcup_{j\in [k-t,k]} I_j$. Then we want to show that $\gamma$ will  be larger than $2^{-m}\cdot S_k(t)$ by the end of the process, where $m$ is the least element of $I$. 
We will derive this 
(see Corollary \ref{ffj4W2aE1B})
from the following more general result, which can be proved by induction.
We note that in \eqref{jJI1ha5DBP}, in the trivial case where $t=0$ we let
$S_k(t-1):=0$.

\begin{lem}\label{XdtUDTsWV}
Let $g:\Nat\to\Nat$ be a non-decreasing computable function with signature $(c_j,I_j)$ (of infinite length), let
$h(x)=x+g(x)$ and suppose $t\leq k$. 
If $m$ is either inside $I_{k-t}$ or is the largest number which is less than all numbers
in $I_{k-t}$, then
the final value of $\gamma$ after the $h$-load process 
with $(\alpha,\beta,\gamma)$
in the interval
\begin{equation}\label{oE6vNBPbI}
(m,\max I_{k-t}]\cup (\bigcup_{i\in (k-t,k]} I_i)
\end{equation}
has the property that
\begin{equation}\label{jJI1ha5DBP}
T(2^{m}\cdot \gamma, c_{k-t})=S_k(t-1)+(\max I_{k-t}-m)\cdot 2^{-c_{k-t}}.
\end{equation}
\end{lem}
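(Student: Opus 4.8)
The plan is to establish \eqref{jJI1ha5DBP} by a double induction: an outer induction on $t$, which successively brings in the coarser intervals $I_{k-t}$, and for each fixed $t$ a secondary \emph{downward} induction on $m$, running from $m=\max I_{k-t}$ down to $m=\min I_{k-t}-1$. Throughout I will use the elementary reformulation $T(x,c)=\lfloor x\cdot 2^{c}\rfloor\cdot 2^{-c}$ of the truncation operator (immediate from its definition, since the discarded tail is smaller than $2^{-c}$): thus $T(x,c)$ is $x$ rounded down to a multiple of $2^{-c}$, and $T$ obeys the composition law $T(T(x,c_b),c_a)=T(x,c_a)$ whenever $c_a\le c_b$. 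I write $\gamma^{[m]}$ for the final value of $\gamma$ produced by the $h$-load process on the interval \eqref{oE6vNBPbI} attached to $m$; since every position of that interval lies in $I_{k-t}\cup\cdots\cup I_k$, this interval is simply $(m,\max I_k]$.

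For the inner base case $m=\max I_{k-t}$ the interval degenerates to $\bigcup_{i\in(k-t,k]}I_i$, which is exactly the full interval attached to the parameter $t-1$ with left endpoint $m'=\max I_{k-t}=\min I_{k-t+1}-1$. If $t=0$ this is empty, $\gamma=0$, and the claim reduces to $T(0,c_k)=0$. If $t\ge 1$ the outer induction hypothesis gives $T(2^{m'}\gamma,c_{k-t+1})=S_k(t-2)+|I_{k-t+1}|\cdot 2^{-c_{k-t+1}}$; applying $T(\cdot,c_{k-t})$ and invoking the composition law (note $c_{k-t}<c_{k-t+1}$) together with the defining recursion for $S_k$ yields $T(2^{m'}\gamma,c_{k-t})=S_k(t-1)$, which is \eqref{jJI1ha5DBP} since $\max I_{k-t}-m=0$.

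The heart of the argument is the inner step from $m+1$ to $m$, so that $m+1\in I_{k-t}$ and $h(m+1)=(m+1)+c_{k-t}=:p$. I would first show that the process for $m$ factors into three phases. In phase (a) it coincides with the process for $m+1$ until the block $(m+1,\max I_k]$ is saturated in both $\alpha$ and $\beta$, leaving $\gamma=\gamma^{[m+1]}$ and position $m+1$ still zero. In phase (b) the next two increments (one in $\alpha$, one in $\beta$) each carry up to position $m+1$ and hence force a change of $\gamma\restr_p$; by the least-effort rule each moves $\gamma$ to the next multiple of $2^{-p}$, so that afterwards $\gamma=\gamma_2:=(\lfloor \gamma^{[m+1]}2^{p}\rfloor+2)\cdot 2^{-p}$, while the block $(m+1,\max I_k]$ has been cleared. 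In phase (c) that block is refilled; here I would apply the Accumulation lemma (Lemma \ref{le:pass}) with $\sigma=\gamma_2$, observing that every instruction issued during the refill demands a change of $\gamma$ at a position $\ge (m+2)+c_{k-t}=p+1>|\gamma_2|$ (as $\gamma_2$ is a multiple of $2^{-p}$). Hence the refill replays phase (a) shifted by $\gamma_2$, giving $\gamma^{[m]}=\gamma^{[m+1]}+\gamma_2$.

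It then remains to evaluate $T(2^{m}\gamma^{[m]},c_{k-t})=\lfloor \gamma^{[m]}2^{m+c_{k-t}}\rfloor\cdot 2^{-c_{k-t}}$. Putting $w=\gamma^{[m+1]}2^{p}$ and using $\gamma^{[m]}=\gamma^{[m+1]}+\gamma_2$ together with $p=m+1+c_{k-t}$, a short manipulation collapses the integer part to $\lfloor w/2+\lfloor w\rfloor/2\rfloor+1=\lfloor w\rfloor+1$; since $\lfloor w\rfloor\cdot 2^{-c_{k-t}}=T(2^{m+1}\gamma^{[m+1]},c_{k-t})$, the inner induction hypothesis turns this into $S_k(t-1)+(\max I_{k-t}-m)\cdot 2^{-c_{k-t}}$, which is \eqref{jJI1ha5DBP}. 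I expect the genuine difficulty to be exactly this step: justifying the clean three-phase factorisation — in particular that the refill never disturbs the bits of $\gamma$ at resolution $c_{k-t}$, which is precisely what the Accumulation lemma secures — and keeping track of the change of normalisation from $2^{m+1}$ to $2^{m}$ inside the floor. The base level $t=0$ can be handled either by this same inner induction or read off directly from the Atomic attack lemma (Lemma \ref{prop:main}), since $g$ is constant on $(m,\max I_k]$.
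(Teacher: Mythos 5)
Your proposal is correct and follows essentially the same route as the paper's proof: the same outer induction on $t$ with an inner downward induction on $m$, the same three-phase decomposition of the attack (a replay of the $(m+1)$-process, two forced changes of $\gamma$ at level $p=m+1+c_{k-t}$, then a refill controlled by the Accumulation Lemma \ref{le:pass}), and the same treatment of the cross-interval case via the recursion defining the truncated sums $S_k$. The only differences are presentational: you recast $T$ as $T(x,c)=\lfloor x\cdot 2^{c}\rfloor\cdot 2^{-c}$ and finish with an explicit floor computation where the paper uses its halving identity for $T$, and your ``next multiple of $2^{-p}$'' bookkeeping in phase (b) is, if anything, slightly more careful than the paper's corresponding wording.
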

\begin{proof}
We use finite induction on the numbers $t\leq k$
and the numbers $m$ in $I_{k-t}\cup \{\min I_{k-t} -1\}$.
For the case when $t=0$, the $h$-load process occurs in the interval $(m,\max I_k]$,
where $m$ is either $\min I_k -1$ or any number in $I_k$.
In this case, by Lemma \ref{prop:main}
we have 
$T(2^{m}\cdot \gamma, c_{k})=(\max I_{k}-m)\cdot 2^{-c_{k}}$.
Therefore in this case \eqref{jJI1ha5DBP} holds.

Now consider some $t>0$ (and, of course, $t\leq k$) and any
$m\in [\min I_{k-t}-1, \max I_{k-t})$, and inductively assume that when the attack occurs in
the interval
\begin{equation}\label{xXZr9C1fqG}
(m+1,\max I_{k-t}]\cup (\bigcup_{i\in (k-t,k]} I_i)
\end{equation}
the final value of $\gamma$ satisfies
\begin{equation}\label{Cn4QkQm4AJ}
T(2^{m+1}\cdot \gamma, c_{k-t})=S_k(t-1)+(\max I_{k-t}-m-1)\cdot 2^{-c_{k-t}}.
\end{equation}

Note that this conclusion follows from the induction hypothesis, even in the case that $m=\max I_{k-t}-1$, since then the induction hypothesis gives:
\begin{equation}\label{Cn4QkQm4AO}
T(2^{m+1}\cdot \gamma, c_{k-(t-1)})=S_k(t-2)+|I_{k-(t-1)}| \cdot 2^{-c_{k-(t-1)}}.  
\end{equation}
Now if we let the r.h.s.\  equal $\delta$, then by definition $T(\delta,c_{k-t})=S_k(t-1)$, giving \ref{Cn4QkQm4AJ}, as required.

There are two qualitatively different cases regarding the value of $m+1$.
The first one is when $m+1=\max I_{k-t}$, which means that $m+1$ is the first
number in the latest interval  $I_{k-t}$. The other case is when
$m+1<\max I_{k-t}$ and $m+1\geq \min I_{k-t}$.
What is special in the first case is that half-way through the $h$-load process in 
the interval \eqref{oE6vNBPbI}, when $\alpha(m+1)$ changes, this is the first time
that some $\alpha(i)$ change requires $\gamma$ to change on digit $i+c_{k-t}$
(or before that). All previous requests on $\gamma$  required a change at  
$i+c_{k-t+s}$ for $s>0$. In other words, the case $m+1=\max I_{k-t}$ is special because 
it is when we cross into a new interval. Despite this apparent difference 
between the two cases, they can be dealt with uniformly, as we now show.

The attack on the interval \eqref{oE6vNBPbI} can be partitioned into three phases.
Let $\ell$ be the maximum of the interval \eqref{oE6vNBPbI}. 
Recall that each stage of this attack consists of adding $2^{-\ell}$ alternately to $\alpha$ 
and $\beta$.
The first phase of the attack consists of those stages up to the point where
$\alpha(m+1)=\beta(m+1)=0$ and
$\alpha(i)=\beta(i)=1$ for all $i$ in the interval
\eqref{xXZr9C1fqG}. According to the induction hypothesis,
at the end of the first phase of the attack \ref{Cn4QkQm4AJ} holds.
By the definition of $S_k(t-1)$, this number is an integer multiple of $2^{-c_{k-t}}$.
At the next stage of the attack, 
$\alpha(m+1)$ will turn from 0 to 1, forcing $\gamma$ to change at position $m+1+c_{k-t}$ or higher.
This means that $T(\gamma, m+1+ c_{k-t})$ will increase by $2^{-m-1-c_{k-t}}$, because it does not have
any 1s after position $m+1+c_{k-t}$.
In other words, 
$T(2^{m+1}\cdot\gamma, c_{k-t})$ will increase by $2^{-c_{k-t}}$.
Then 
$\beta(m+1)$ will turn from 0 to 1, 
forcing $\gamma$ to change again at position $m+1+c_{k-t}$ or higher.
This means that 
$T(2^{m+1}\cdot\gamma, c_{k-t})$ will increase by another $2^{-c_{k-t}}$.
The latter two stages make up the second phase of the attack. The third and final
phase consists of all remaining stages of the attack. 
At the end of the second phase $T(2^{m+1}\cdot\gamma, c_{k-t})$ has increased by
\[
2\cdot 2^{-c_{k-t}} +S_k(t-1)+(\max I_{k-t}-m-1)\cdot 2^{-c_{k-t}}
\]
compared to its initial value (which was 0).
By applying the
induction hypothesis and the accumulation lemma
(Lemma \ref{le:pass}), at the end of the attack
 $T(2^{m+1}\cdot\gamma, c_{k-t})$  then equals:
\[
2\cdot 2^{-c_{k-t}} +2\cdot (S_k(t-1)+(\max I_{k-t}-m-1)\cdot 2^{-c_{k-t}})
=
2\cdot (S_k(t-1)+(\max I_{k-t}-m)\cdot 2^{-c_{k-t}}).
\]
By definition
$S_k(t-1)$ is an integer multiple of $2^{-c_{k-t}}$.  The equation above therefore establishes 
\eqref{jJI1ha5DBP} as required, since  if $T(2^{m+1}\cdot\alpha,c)=2\cdot C$ 
where $C$ is an integer multiple of $2^{-c}$, then
$T(2^{m}\cdot\alpha,c)=C$.
This concludes the induction step and the proof of the lemma.
\end{proof}

The main construction employs $h$-load processes in unions of consecutive intervals $I_j$
from the signature of $g$. So we only need the following special case of Lemma \ref{XdtUDTsWV}.

\begin{coro}[Lower bound]\label{ffj4W2aE1B}
Let $g:\Nat\to\Nat$ be a non-decreasing computable function with infinite signature $(c_j,I_j)$, and let
$h(x)=x+g(x)$.
The final value of $\gamma$ after the $h$-load process 
with $(\alpha,\beta,\gamma)$
in the interval $I=\bigcup_{j\in [k-t,k]} I_j$  is at least
$2^{-m}\cdot S_k(t)$, where $m$ is the largest number less than all elements of $I$.
\end{coro}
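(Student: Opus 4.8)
The plan is to obtain this as the special case of Lemma \ref{XdtUDTsWV} in which $m$ is taken to be the largest number less than all elements of $I$; that is, $m=\min I_{k-t}-1$. First I would observe that, since $g$ is non-decreasing and $c_j>c_{j'}$ for $j>j'$, the intervals $I_{k-t},\dots,I_k$ occur in left-to-right order, so $I_{k-t}$ is the leftmost block of $I$ and $m=\min I_{k-t}-1$ is indeed ``the largest number which is less than all numbers in $I_{k-t}$''. For this choice of $m$ the block $(m,\max I_{k-t}]$ consists exactly of the integers of $I_{k-t}$, and hence the interval $(m,\max I_{k-t}]\cup\bigl(\bigcup_{i\in(k-t,k]}I_i\bigr)$ appearing in Lemma \ref{XdtUDTsWV} coincides with $I=\bigcup_{j\in[k-t,k]}I_j$. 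So Lemma \ref{XdtUDTsWV} applies directly to the $h$-load process on $I$.

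Next I would read off the conclusion of Lemma \ref{XdtUDTsWV} for this $m$. Since $\max I_{k-t}-m=\max I_{k-t}-\min I_{k-t}+1=|I_{k-t}|$, the lemma yields $T(2^{m}\cdot\gamma,c_{k-t})=S_k(t-1)+|I_{k-t}|\cdot 2^{-c_{k-t}}$ for the final value of $\gamma$.

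The last step is to compare this quantity with $S_k(t)$, which by definition equals $T\bigl(|I_{k-t}|\cdot 2^{-c_{k-t}}+S_k(t-1),\,c_{k-t-1}\bigr)$. Here the key points are that $S_k(t-1)$ is an integer multiple of $2^{-c_{k-t}}$, so $|I_{k-t}|\cdot 2^{-c_{k-t}}+S_k(t-1)$ carries no term below level $c_{k-t}$ and therefore already equals $T(2^{m}\cdot\gamma,c_{k-t})$; and that truncating this at the strictly smaller level $c_{k-t-1}<c_{k-t}$ can only discard terms, so $S_k(t)\le T(2^{m}\cdot\gamma,c_{k-t})\le 2^{m}\cdot\gamma$. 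Dividing through by $2^{m}$ gives $\gamma\ge 2^{-m}\cdot S_k(t)$, as required (the trivial case $t=0$ is covered by the same argument using the convention $S_k(-1):=0$). The whole argument is a bookkeeping specialization of Lemma \ref{XdtUDTsWV}; the only point demanding care is verifying that the extra truncation from level $c_{k-t}$ down to $c_{k-t-1}$ built into the definition of $S_k(t)$ only decreases the value, which is precisely what turns the equality of the lemma into the desired lower bound.
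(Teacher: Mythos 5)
Your proposal is correct and takes essentially the same route as the paper: the paper likewise specializes Lemma \ref{XdtUDTsWV} to $m=\min I_{k-t}-1$ (so that $\max I_{k-t}-m=|I_{k-t}|$), obtains $T(2^{m}\cdot\gamma,c_{k-t})=S_k(t-1)+|I_{k-t}|\cdot 2^{-c_{k-t}}$, and then derives $\gamma\geq 2^{-m}\cdot S_k(t)$ from the fact that the further truncation at level $c_{k-t-1}$ built into the definition of $S_k(t)$ only decreases the value. The only cosmetic difference is in the bookkeeping: the paper writes the chain as $T(2^{m}\cdot\gamma,c_{k-t-1})\geq T(T(2^{m}\cdot\gamma,c_{k-t}),c_{k-t-1})=S_k(t)$, whereas you bound $S_k(t)\leq T(2^{m}\cdot\gamma,c_{k-t})\leq 2^{m}\cdot\gamma$ directly, which is the same monotonicity of $T$ applied in the same places.
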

\begin{proof}
Since
$m$  is the largest number which is less than all numbers
in $I_{k-t}$, we have $|I_{k-t}|=\max I_{k-t}-m$, so by the definition of the reduced sums
and Lemma \ref{XdtUDTsWV} we get
\begin{equation*}
T(2^{m}\cdot\gamma, c_{k-t})=S_k(t-1)+|I_{k-t}|\cdot 2^{-c_{k-t}}
\end{equation*}
so
\begin{equation*}
T(2^{m}\cdot\gamma, c_{k-t-1})\geq
T(T(2^{m}\cdot\gamma, c_{k-t}),c_{k-t-1})=
T(S_k(t-1)+|I_{k-t}|\cdot 2^{-c_{k-t}},c_{k-t-1})=S_k(t),
\end{equation*}
which means that $\gamma\geq 2^{-m}\cdot S_k(t)$.
\end{proof}

We are ready to describe an explicit construction of two reals
$\alpha,\beta$ establishing the second clause of Theorem \ref{yJAbgkLtG2} for the (remaining) case that $g$ has infinite signature.

\subsubsection{Construction of the two reals and verification}\label{llohQmoEdz}
Let $g$ be a non-decreasing computable function with infinite signature such that 
$\sum_i 2^{-g(i)}=\infty$, and let $h(x)=x+g(x)$.
Let $(\Phi_e,\Psi_e,\gamma_e)$ be an effective enumeration of all triples of
Turing functionals $\Phi_e,\Psi_e$, and \ce reals $\gamma_e$.
According to the discussion at the beginning of Section \ref{XLhNRMuqDV}, for the proof of
 the second clause of Theorem \ref{yJAbgkLtG2},  it suffices to construct two \ce reals $\alpha,\beta$
 such that the following requirements are met.
\begin{equation}\label{e1mYBhRnKW}
\mathcal{R}_e:\ \alpha\neq \Phi_e^{\gamma_e}
\hspace{0.3cm}\vee\hspace{0.3cm}
\beta\neq \Psi_e^{\gamma_e}.
\end{equation}
Let $(c_j, I_j)$ be as specified in Definition \ref{sig}. The construction consists of assigning appropriate intervals $J_e$ to the requirements
$\mathcal{R}_e$ and implementing the $h$-load process with $(\alpha,\beta,\gamma_e)$
in $J_e$ independently for each $e$, in a priority fashion.
We start with the definition of the intervals $J_e$, which is informed by the lower bound
established in Corollary \ref{ffj4W2aE1B}.
We define a computable increasing sequence $(n_j)$ inductively and define:
\[
J_e=\bigcup_{j\in (n_e, n_{e+1}]} I_j. 
\]
Let $n_0=1$. Given $n_e$, let
$n_{e+1}$ be the least number which is greater than $n_e$ and such that if $m$ is the largest number in $I_{n_e}$ then: 
\[
2^{-m}\cdot S_{n_{e+1}}(n_{e+1}-n_e)>1.
\]
According to our hypothesis concerning $g$ and Lemma \ref{UxBg9AKlSc}, such a number
$n_{e+1}$ exists, so the definition of $(n_j)$ is sound.
Fix a universal enumeration with respect to which we can
approximate the Turing functionals 
$\Phi_e,\Psi_e$ and the \ce reals $\gamma_e$.
The suffix `$[s]$' on a formal expression means the approximation of the expression at
stage $s$ of the universal enumeration. The stages of the construction are the stages of the
universal enumeration.
Recall from Definition \ref{EavAdjOPTD} that each step
in the $h$-load process on $J_e$ consists of adding $2^{-n_{e+1}}$ alternately
to $\alpha$ and $\beta$.
According to this process, we will only allow
$\mathcal{R}_e$ to act at most $2^{n_{e+1}-n_e}-1$ times for each of $\alpha, \beta$.
We say that strategy $\mathcal{R}_e$ is {\em active} at stage $s$ if:
\begin{enumerate}[\hspace{0.5cm}(a)]
\item $\alpha(t)[s]= \Phi_e^{\gamma_e}(t)[s]$ and 
$\beta(t)[s]= \Psi_e^{\gamma_e}(t)[s]$
for all $t\in\cup_{i\leq e} J_i$.
\item strategy $\mathcal{R}_e$ has acted less than $2\cdot (2^{n_{e+1}-n_e}-1)$ 
many times in previous stages. 
\end{enumerate}

\ \paragraph{{\bf Construction}.}
At each stage $s$, if there exists no $e$ such that $\mathcal{R}_e$ is active, then go to the next stage. Otherwise, let $e$ be the least such. We say that
$s$ is an `$e$-stage'. If there has not been a previous $e$-stage, or if $\beta$ was increased at the most recent $e$-stage, then 
add $2^{-n_{e+1}}$ to $\alpha$ and say that $\mathcal{R}_e$ acts on $\alpha$ at stage $s$. 
Otherwise add $2^{-n_{e+1}}$ to $\beta$ and say that $\mathcal{R}_e$ acts on $\beta$. 
Go to the next stage.

We now verify that the construction produces approximations to reals $\alpha,\beta$ such that
requirement $\mathcal{R}_e$ is met for each $e$. Note that
$\mathcal{R}_e$ 
is only allowed to act at most $2^{n_{e+1}-n_e}-1$ times for each of $\alpha, \beta$.
This means that no action of $\mathcal{R}_e$ can change a digit of $\alpha$ or $\beta$ which is
outside $J_e$. In particular, the approximations to $\alpha$ and $\beta$ that are defined in the construction
converge to two reals in $[0,1]$.

Finally, we show that  for each $e$, requirement $\mathcal{R}_e$ is met and is active 
at only finitely many stages.
Inductively suppose that this is the case for $\mathcal{R}_i$, $i<e$. 
Note that $\mathcal{R}_e$ can only be active when it has
acted less than $2\cdot (2^{n_{e+1}-n_e}-1)$ many times.
Therefore, by the construction and the induction hypothesis,
$\mathcal{R}_e$ is only active at finitely many stages.
It remains to show that $\mathcal{R}_e$ is met. Towards a contradiction, suppose that this is not the case.
Then for some \ce real $\gamma_e'$ (the one that follows the least effort 
strategy with respect to $\alpha,\beta$) throughout a subsequence $s_i, i<k$ 
of the stages of the 
construction,
the triple $(\alpha,\beta,\gamma_e')$ follows a complete $h$-load process
in $J_e$. By 
Corollary \ref{ffj4W2aE1B}, 
it follows that $\gamma_e'\geq 2^{-n_e}\cdot S_{n_{e+1}}(n_{e+1}-n_e)$
and by the choice of $J_e$ we have $\gamma_e'>1$. 
Since the functionals $\Phi_e,\Psi_e$ have use-function $x+g(x)$,
if $\gamma_e \neq \gamma_e'$ then the approximations to $\gamma_e$ in the stages $s_i, i<k$ can be seen as a suboptimal
response to the changes of the bits of $\alpha,\beta$ in $J_e$
according to the amplification games of Section \ref{jfLlVatCrP}.
In particular, according to Lemma \ref{le:beststrat} and Lemma \ref{le:pass},
we have $\gamma_e[s_i]\geq \gamma_e'[s_i]$ for each $i<k$. This means that
$\gamma_e>1$, which contradicts the fact that $\gamma_e\in [0,1]$.
We conclude that  $\mathcal{R}_e$ is satisfied. 

\section{Proof of Theorem \ref{gGr7djFsLI}}
Recall the following theorem.
\begin{repthm}{gGr7djFsLI}
Let $\mathbf{a}$ be a \ce Turing degree and let $\Omega$ be (an instance of)
Chaitin's halting probability.
\begin{enumerate}[(1)]
\item If $\mathbf{a}$ is array computable, then every real in $\mathbf{a}$
is computable from $\Omega$ with identity use.
\item Otherwise there exists a \ce real in $\mathbf{a}$ which is not computable from $\Omega$
with use $n+\log n$.
\end{enumerate}
\end{repthm}

Theorem \ref{gGr7djFsLI} can be obtained directly by an application of the methods in
\cite[Sections 4.2 and 4.3]{IOPORT.05678491} to the
construction in the proof of Theorem \ref{yJAbgkLtG2} (b).
Since there are no new ideas involved in this application, 
and in the interest of space, we merely sketch the argument.
In \cite[Section 4.2]{IOPORT.05678491} 
it was shown that if $\alpha$ is in a \ce array computable degree, then it is computable
from $\sum_n g(n)\cdot 2^{-n}$ with identity use, where $g=\bigo{n}$ is a function with a nondecreasing
computable approximation. It is not hard to see that this implies that $\alpha$ is computable from
$\Omega$ with identity use, which is clause (1) of 
Theorem \ref{gGr7djFsLI}.

In \cite[Section 4.3]{IOPORT.05678491} it was shown that if $\mathbf{a}$ is an array noncomputable
\ce degree, then there exist two \ce reals in such that no \ce real can compute both of them with
use $n+\bigo{1}$. This result was obtained by the application of array noncomputable permitting to
the construction in \cite{DingY04} (which obtained two \ce reals with the above property).
The same can be done in entirely similar way to the construction
in the proof of Theorem \ref{yJAbgkLtG2} (2), which is structurally similar to the construction in
 \cite{DingY04}. This extension of our argument in Section 
\ref{XLhNRMuqDV} shows that the two \ce reals of
the second clause of Theorem \ref{yJAbgkLtG2} (1) can be found in any array noncomputable
\ce degree. If we choose $h(n)=n+\log n$ in this theorem, we obtain 
the second clause of Theorem \ref{gGr7djFsLI}.

\section{Conclusion and a conjecture}
Chaitin's omega numbers are known to compute the solutions to many interesting problems,
and to do so with  access only to a short initial segments of the oracle.
Although $\Omega$ contains the same information as the halting problem, this information is much more
tightly packed into short initial segments. 
Despite these well-known facts, little was known about the number of bits of $\Omega$ that
are needed to compute  halting probabilities or  \ce sets, and in particular the 
asymptotically optimal use of the oracle in such computations.

In this work we provide answers to these questions, and expose various connections with
current research in algorithmic randomness.
Moreover, our results point to several other open problems
which appear to be an interesting direction of research.

Barmpalias and Lewis-Pye \cite{jflBaiasL06} showed that there exists a \ce real which is not
computable from any omega number with use $n+\bigo{1}$.
Another proof of this result (which was used to obtain a characterisation of the degrees
of the \ce reals with this property) was given in
\cite[Section 4.3]{IOPORT.05678491}.
We conjecture that this holds more generally in the spirit of this paper.
In particular, if $g$ is a computable non-decreasing function such that
$\sum_i 2^{-g(i)}=\infty$, we conjecture that there exists a \ce real which is not computable
by any omega number with use $g(n)+n$.


\begin{thebibliography}{ASDFM11}

\bibitem[ASDFM11]{Merklebarmp}
Klaus Ambos-Spies, Decheng Ding, Yun Fan, and Wolfgang Merkle.
\newblock Maximal pairs of computably enumerable sets in the
  computable-{L}ipschitz degrees.
\newblock Submitted, 2011.

\bibitem[Bar05]{DBLP:conf/cie/Barmpalias05}
George Barmpalias.
\newblock Computably enumerable sets in the {S}olovay and the strong weak truth
  table degrees.
\newblock In S.~Barry Cooper, Benedikt L{\"o}we, and Leen Torenvliet, editors,
  {\em CiE}, volume 3526 of {\em Lecture Notes in Computer Science}, pages
  8--17. Springer, 2005.

\bibitem[BD09]{DBLP:conf/stacs/BienvenuD09}
Laurent Bienvenu and Rod~G. Downey.
\newblock Kolmogorov complexity and {S}olovay functions.
\newblock In Susanne Albers and Jean-Yves Marion, editors, {\em STACS},
  volume~3 of {\em LIPIcs}, pages 147--158. Schloss Dagstuhl - Leibniz-Zentrum
  fuer Informatik, Germany, 2009.

\bibitem[BDG10]{IOPORT.05678491}
George Barmpalias, Rodney Downey, and Noam Greenberg.
\newblock Working with strong reducibilities above totally $\omega $-c.e. and
  array computable degrees.
\newblock {\em Transactions of the American Mathematical Society},
  362(2):777--813, 2010.

\bibitem[Ben79]{bennettreal79}
Charles~H. Bennett.
\newblock On random and hard-to-describe numbers.
\newblock Technical report, IBM Watson Research Center Yorktown Heights, NY
  10598, USA, 3 May 1979.

\bibitem[BHLM13]{ipl/BarmpaliasHLM13}
George Barmpalias, Rupert H{\"{o}}lzl, Andrew E.~M. Lewis, and Wolfgang Merkle.
\newblock Analogues of {C}haitin's omega in the computably enumerable sets.
\newblock {\em Inf. Process. Lett.}, 113(5-6):171--178, 2013.

\bibitem[BL06a]{jflBaiasL06}
George Barmpalias and Andrew E.~M. Lewis.
\newblock A c.e.\ real that cannot be sw-computed by any omega number.
\newblock {\em Notre Dame Journal of Formal Logic}, 47(2):197--209, 2006.

\bibitem[BL06b]{BLibT}
George Barmpalias and Andrew E.~M. Lewis.
\newblock The ib{T} degrees of computably enumerable sets are not dense.
\newblock {\em Ann. Pure Appl. Logic}, 141(1-2):51--60, 2006.

\bibitem[BL06c]{BL05.5}
George Barmpalias and Andrew E.~M. Lewis.
\newblock Random reals and {L}ipschitz continuity.
\newblock {\em Math. Structures Comput. Sci.}, 16(5):737--749, 2006.

\bibitem[BL07]{MR2286414}
George Barmpalias and Andrew E.~M. Lewis.
\newblock Randomness and the linear degrees of computability.
\newblock {\em Ann. Pure Appl. Logic}, 145(3):252--257, 2007.

\bibitem[BMN11]{BienvenuMN11}
Laurent Bienvenu, Wolfgang Merkle, and Andr{\'e} Nies.
\newblock Solovay functions and {K}-triviality.
\newblock In {\em 28th Annual Symposium on Theoretical Aspects of Computer
  Science (STACS 2011)}, Schloss Dagstuhl - LIPIcs 9, pages 452--463, 2011.

\bibitem[CD07]{ijbc/CaludeD07}
Cristian~S. Calude and Michael~J. Dinneen.
\newblock Exact approximations of omega numbers.
\newblock {\em I. J. Bifurcation and Chaos}, 17(6):1937--1954, 2007.

\bibitem[CDS02]{CaludeDS02}
Cristian~S. Calude, Michael~J. Dinneen, and Chi{-}Kou Shu.
\newblock Computing a glimpse of randomness.
\newblock {\em Experimental Mathematics}, 11(3):361--370, 2002.

\bibitem[Cha75]{MR0411829}
Gregory~J. Chaitin.
\newblock A theory of program size formally identical to information theory.
\newblock {\em J. Assoc. Comput. Mach.}, 22:329--340, 1975.

\bibitem[Cha87]{chaitinincomp}
Gregory~J. Chaitin.
\newblock Incompleteness theorems for random reals.
\newblock {\em Advances in Applied Mathematics}, 8:119--146, 1987.

\bibitem[CHKW01]{Calude.Hertling.ea:01}
Christian. Calude, Peter. Hertling, Bakhadyr Khoussainov, and Yongge Wang.
\newblock Recursively enumerable reals and {C}haitin {$\Omega$} numbers.
\newblock {\em Theoret. Comput. Sci.}, 255(1-2):125--149, 2001.

\bibitem[CN97]{CaludeN97}
Cristian Calude and Andr{\'{e}} Nies.
\newblock Chaitin omega numbers and strong reducibilities.
\newblock {\em J. {UCS}}, 3(11):1162--1166, 1997.

\bibitem[Day10]{apal/Day10}
Adam~R. Day.
\newblock The computable {L}ipschitz degrees of computably enumerable sets are
  not dense.
\newblock {\em Ann. Pure Appl. Logic}, 161(12):1588--1602, 2010.

\bibitem[DH10]{rodenisbook}
Rod~G. Downey and Denis Hirshfeldt.
\newblock {\em Algorithmic Randomness and Complexity}.
\newblock Springer, 2010.

\bibitem[DHL04]{MR2030512}
Rod~G. Downey, Denis~R. Hirschfeldt, and Geoff LaForte.
\newblock Randomness and reducibility.
\newblock {\em J. Comput. System Sci.}, 68(1):96--114, 2004.

\bibitem[DHN02]{DHN}
Rod~G. Downey, Denis Hirschfeldt, and Andr\'{e} Nies.
\newblock Randomness, computability and density.
\newblock {\em SIAM J. Computing}, 31:1169--1183, 2002.

\bibitem[DJS96]{DJS2}
Rod~G. Downey, Carl~G. Jockusch, Jr., and Michael Stob.
\newblock Array nonrecursive sets and genericity.
\newblock In {\em Computability, Enumerability, Unsolvability: Directions in
  Recursion Theory}, volume 224 of {\em London Mathematical Society Lecture
  Notes Series}, pages 93--104. Cambridge University Press, 1996.

\bibitem[DY04]{DingY04}
Decheng Ding and Liang Yu.
\newblock There is no {\it sw}-complete c.e. real.
\newblock {\em J. Symb. Log.}, 69(4):1163--1170, 2004.

\bibitem[FSW06]{jc/FigueiraSW06}
Santiago Figueira, Frank Stephan, and Guohua Wu.
\newblock Randomness and universal machines.
\newblock {\em J. Complexity}, 22(6):738--751, 2006.

\bibitem[Gar79]{bennet79}
Martin Gardner.
\newblock The random number omega bids fair to hold the mysteries of the
  universe.
\newblock {\em Scientific American}, 241:20--34, 1979.

\bibitem[HKM09]{holzlMerkKra09}
Rupert H\"{o}lzl, Thorsten Kr\"{a}ling, and Wolfgang Merkle.
\newblock Time-bounded {K}olmogorov complexity and {S}olovay functions.
\newblock In {\em Mathematical {F}oundations of {C}omputer {S}cience, 2009},
  volume 5734 of {\em Lecture Notes in Comput. Sci.}, pages 392--402. Springer,
  2009.

\bibitem[Kol65]{MR0184801}
Andrey~N. Kolmogorov.
\newblock Three approaches to the definition of the concept ``quantity of
  information''.
\newblock {\em Problemy Pereda\v ci Informacii}, 1(vyp. 1):3--11, 1965.

\bibitem[KS01]{Kucera.Slaman:01}
Anton{\'{\i}}n Ku{\v{c}}era and Theodore Slaman.
\newblock Randomness and recursive enumerability.
\newblock {\em SIAM J. Comput.}, 31(1):199--211, 2001.

\bibitem[Kum96]{DBLP:journals/siamcomp/Kummer96}
Martin Kummer.
\newblock Kolmogorov complexity and instance complexity of recursively
  enumerable sets.
\newblock {\em SIAM J. Comput.}, 25(6):1123--1143, 1996.

\bibitem[Lev74]{Lev74tra}
Leonid~A. Levin.
\newblock Laws of information conservation (nongrowth) and aspects of the
  foundation of probability theory.
\newblock {\em Problems Inform. Transmission}, 10:206--210, 1974.

\bibitem[Los15]{Losert15}
Nadine Losert.
\newblock Where join preservation fails in the bounded turing degrees of {C.E.}
  sets.
\newblock In {\em Theory and Applications of Models of Computation - 12th
  Annual Conference, {TAMC} 2015, Singapore, May 18-20, 2015, Proceedings},
  pages 38--49, 2015.

\bibitem[LV08]{MR1438307}
Ming Li and Paul Vit{\'a}nyi.
\newblock {\em An introduction to {K}olmogorov complexity and its
  applications}.
\newblock Graduate Texts in Computer Science. Springer-Verlag, New York, third
  edition, 2008.

\bibitem[Lyn74]{Lynch1974143}
Nancy Lynch.
\newblock Approximations to the halting problem.
\newblock {\em Journal of Computer and System Sciences}, 9(2):143 -- 150, 1974.

\bibitem[ML66]{MR0223179}
Per Martin-L{\"o}f.
\newblock The definition of random sequences.
\newblock {\em Information and Control}, 9:602--619, 1966.

\bibitem[Nie09]{Ottobook}
Andr\'{e} Nies.
\newblock {\em Computability and Randomness}.
\newblock Oxford University Press, 2009.

\bibitem[Sch75]{mst/Schnorr75}
Claus{-}Peter Schnorr.
\newblock Optimal enumerations and optimal g{\"{o}}del numberings.
\newblock {\em Mathematical Systems Theory}, 8(2):182--191, 1975.

\bibitem[Sol64]{Solomonoff:64}
Ray~J. Solomonoff.
\newblock A formal theory of inductive inference. {I} and {II}.
\newblock {\em Information and Control}, 7:1--22 and 224--254, 1964.

\bibitem[Sol75]{Solovay:75}
Robert~M. Solovay.
\newblock Handwritten manuscript related to {C}haitin's work.
\newblock IBM Thomas J. Watson Research Center, Yorktown Heights, NY, 215
  pages, 1975.

\bibitem[Sol00]{solovay00}
Robert~M. Solovay.
\newblock A version of {$\Omega$} for which {ZFC} can not predict a single bit.
\newblock In G.~P\v{a}un~(eds.). C.S.~Calude, editor, {\em Finite Versus
  Infinite. Contributions to an Eternal Dilemma}, pages 323--334.
  Springer-Verlag, London, 2000.

\bibitem[Tad09]{Tadaki:2009_72}
Kohtaro Tadaki.
\newblock Chaitin {$\Omega$} numbers and halting problems.
\newblock In {\em Proceedings of the 5th Conference on Computability in Europe:
  Mathematical Theory and Computational Practice}, CiE '09, pages 447--456,
  Berlin, Heidelberg, 2009. Springer-Verlag.

\bibitem[Tad11a]{tadaki2011rep}
Kohtaro Tadaki.
\newblock A computational complexity-theoretic elaboration of weak truth-table
  reducibility.
\newblock Technical report, Chuo University, Japan, Centre for Discrete
  Mathematics and Theoretical Computer Science, July 2011.

\bibitem[Tad11b]{DBLP:conf/itw/Tadaki11}
Kohtaro Tadaki.
\newblock Robustness of statistical mechanical interpretation of algorithmic
  information theory.
\newblock In {\em 2011 {IEEE} Information Theory Workshop, {ITW} 2011, Paraty,
  Brazil, October 16-20, 2011}, pages 237--241. {IEEE}, 2011.

\bibitem[Tad12]{DBLP:journals/mscs/Tadaki12}
Kohtaro Tadaki.
\newblock A statistical mechanical interpretation of algorithmic information
  theory {III:} composite systems and fixed points.
\newblock {\em Mathematical Structures in Computer Science}, 22(5):752--770,
  2012.

\end{thebibliography}

\end{document}